\documentclass{iopart}

\usepackage{t1enc}
\usepackage{graphicx}
\usepackage[english]{babel}
\usepackage[T1]{fontenc}
\usepackage[latin1]{inputenc}
\usepackage{url}
\usepackage{graphicx}
\usepackage{graphics}
\usepackage{subfig}
\usepackage{iopams} 
\usepackage{setstack}
\usepackage{amstext}
\usepackage{amsopn}
\usepackage{amsthm}

\newtheorem{defi}{Definition}
\newtheorem{theo}{Theorem}

\newcommand{\R}{\mathbb{R}}
\newcommand{\J}{\mathcal{J}}
\newcommand{\norm}[1]{\Vert #1 \Vert}
\newcommand{\sqnorm}[1]{\Vert #1 \Vert_2^2}
\newcommand{\linorm}[1]{\Vert #1 \Vert_{\Sigma_\varepsilon^{-1}}^2}
\newcommand{\abs}[1]{\vert #1 \vert}
\newcommand{\sign}{\text{sign}}

\newcommand{\argmaxsub}[1]{\underset{{ #1 }}{{\rm argmax}}}
\newcommand{\argminsub}[1]{\underset{{ #1 }}{{\rm argmin}}}
\newcommand{\Exp}{\mathbb{E}}

\newcommand{\uMAP}{\hat{u}_{\text{\tiny MAP}}}
\newcommand{\uCM}{\hat{u}_{\text{\tiny CM}}}
\newcommand{\pMAP}{\hat{p}_{\text{\tiny MAP}}}
\newcommand{\pCM}{\hat{p}_{\text{\tiny CM}}}
\newcommand{\PsiLS}{\Psi_{\text{\tiny LS}}(u,\hat{u})}
\newcommand{\PsiBrg}{\Psi_{\text{\tiny Brg}}(u,\hat{u})}
\newcommand{\PsiLSnoarg}{\Psi_{\text{\tiny LS}}}
\newcommand{\PsiBrgnoarg}{\Psi_{\text{\tiny Brg}}}
\newcommand{\eqab}[1]{\stackrel{ #1 }{=}}
\newcommand{\NoiSig}{\Sigma_{\varepsilon}^{-1}}
\setlength{\fboxsep}{0pt}

\begin{document}

\title[MAP Estimates are Proper Bayes Estimators]{Maximum-A-Posteriori Estimates in Linear Inverse Problems with Log-concave Priors are Proper Bayes Estimators}

\author{Martin Burger$^{1,2}$, Felix Lucka$^{1,2,3}$}

\address{$^1$ Institute for Computational and Applied Mathematics, University of M\"unster, Einsteinstr. 62, D-48149 M\"unster, Germany}
\address{$^2$ Cells in Motion Cluster of Excellence, University of M\"unster, Mendelstr. 12, D-48149 M\"unster, Germany}
\address{$^3$ Institute for Biomagnetism and Biosignalanalysis, University of M\"unster, Malmedyweg 15, D-48149 M\"unster, Germany}

\ead{martin.burger@wwu.de}

\begin{abstract}
A frequent matter of debate in Bayesian inversion is the question, which of the two principle point-estimators, the maximum-a-posteriori (MAP) or the conditional mean (CM) estimate is to be preferred. As the MAP estimate corresponds to the solution given by variational regularization techniques, this is also a constant matter of debate between the two research areas. Following a theoretical argument - the Bayes cost formalism - the CM estimate is classically preferred for being the Bayes estimator for the mean squared error cost while the MAP estimate is classically discredited for being only asymptotically the Bayes estimator for the uniform cost function. \\
In this article we present recent theoretical and computational observations that challenge this point of view, in particular for high-dimensional sparsity-promoting Bayesian inversion. Using Bregman distances, we present new, proper convex Bayes cost functions for which the MAP estimator is the Bayes estimator. We complement this finding by results that correct further common misconceptions about MAP estimates. In total, we aim to rehabilitate MAP estimates in linear inverse problems with log-concave priors as proper Bayes estimators. 
\end{abstract}

\ams{65J22,62F15,62C10,65C60,62F10,65C05}
\submitto{\IP}
\maketitle


\section{Introduction} \label{sec:Intro}

Bayesian models have received considerable attention in inverse problems over the last years. A particular advantage of the Bayesian approach is the systematic treatment of stochastic forward models and of prior knowledge about solutions, which is closely related to classical regularization theory. While the basic idea is now widely accepted in the inverse problems community, there is still a debate concerning the choice of point estimates. While pragmatical and computational reasons are clearly favouring maximum a-posteriori probability estimates, those are considered inferior to others like conditional mean estimates by statistical arguments. In particular the Bayes cost approach argues the latter to minimize a natural cost while the maximum a-posteriori probability estimate can only be obtained asymptotically from a degenerate cost. In this paper, we will present a novel viewpoint on maximum a-posteriori probability estimates, having in mind high-dimensional log-concave priors such as popular sparsity priors. Our computational and theoretical results puts the inferiority compared to conditional means estimates under question. \\
We consider the inverse problem of solving a linear, ill-posed operator equation for the true, infinite-dimensional solution $\tilde{u}$. Here, we start from the following discrete model chosen for obtaining a computational solution:
 \begin{equation}
  f = K \, u + \varepsilon, \label{eq:FwdEq1}
 \end{equation}
where $f \in \R^m$ represents the given measured data, $u \in \R^n$ represents a discretization of $\tilde{u}$, $ K \in \R^{m \times n}$ is the discretization of the continuous forward operator with respect to the spaces of $u$ and $f$ and $\varepsilon  \in \R^m$ is an additive, stochastic noise term. For simplicity we restrict ourselves to the case of $\varepsilon$ being Gaussian. We want to solve \eref{eq:FwdEq1} in the framework of Bayesian inversion, which we will briefly sketch in the following (cf. \cite{KaSo05} for further details and \cite{HaLaLeSaTa04,CuFoSu11,KaFo11,HmKaKoLaNiSi13,LaSeKaVaKoToMa13,TaPuCoKaAr13} for exemplary applications):\\
 First, the stochastic nature of the noise term renders \eref{eq:FwdEq1} into a relation between the \emph{random variables} $F$ and $\mathcal{E}$:
\begin{equation}
 F =   K \,  u + \mathcal{E}, \label{eq:LikeMod}
\end{equation}
where we assume $\mathcal{E} \sim \mathcal{N}(0,\Sigma_\varepsilon)$. Now, \eref{eq:LikeMod}  determines the conditional probability density of $F$ given $u$ (the \emph{likelihood} density):
\begin{equation}
  p_{li}(f|u) \propto \exp \left( -\frac{1}{2 } \linorm{ f - K \, u }\right), \quad \text{with} \quad  \norm{y}_A^2 := y^T A \, y\label{Likelihood}
\end{equation}
Standard statistical inference strategies like \emph{maximum-likelihood estimation} would try to estimate $u$ on the basis of \eref{Likelihood}. However, in typical inverse problems, the ill-posedness of \eref{eq:FwdEq1} precludes this approach. \emph{Bayesian inference strategies} rely on encoding \emph{a-priori} information about $u$ by modeling it as a random variable as well ($U$ in our notation). Its density, $p_{pr}(u)$ is therefore called the \emph{prior}. \emph{Bayes' rule} can then be used to construct the posterior probability density:
\begin{equation}
  p_{po}(u|f) = \frac{p_{li}(f|u)p_{pr}(u)}{p(f)} \label{eq:BayesRule}
\end{equation}
This conditional density of $U$ given $F$ is called the \emph{posterior}
In Bayesian inversion, this density is the complete solution to the inverse problem and, thus, the central object of interest (see Figure \ref{fig:DistPlots} for an illustration). \emph{Bayesian inference} is the process of extracting the information of interest from the posterior:
  \begin{itemize}
  \item Point estimates infer a single estimate of $u$ from the posterior.
  \item Credible regions estimates search for sets that bound $u$ with a certain probability.
  \item Extreme value probabilities try to estimate the probability that a feature $g(u)$ exceeds some critical value.
  \item Conditional covariance estimates try to assess the spatial distribution of variance and dependencies between the components of $u$.
  \item Histogram estimates analyze the distributions of single components $u_i$.
 \end{itemize}
More advanced Bayesian techniques like the treatment of nuisance parameters by \emph{marginalization} or \emph{approximation error modeling} \cite{KaSo05,KaSo07,NiKoKa11,LiKoRoKo13,TaPuCoKaAr13}, \emph{model comparison, selection or averaging} \cite{TrAuVa04,HeMaPhFr09}, and \emph{experimental design} \cite{To11}
are also based on the above formalism and principles.\\
Up to now, we did not address the most important step in Bayesian inversion, i.e., the construction of the prior $p_{pr}(u)$ (\emph{Bayesian modeling}). All theorems developed in this work are valid for \emph{log-concave Gibbs distributions} of the form
\begin{equation}
  p_{pr}(u) \propto \exp \left(- \lambda \J(u) \right),  \label{eq:GibbsPrior}
\end{equation}
where $\J(u)$ is a convex functional (called the prior \emph{energy}), and $\lambda > 0$ is a scaling parameter. This includes a wide range of distributions commonly used in Bayesian inversion. The corresponding posterior is given by:
\begin{equation}
p_{po}(u|f) \propto \exp\left(-\frac{1}{2 } \linorm{  f - K \, u } - \lambda \J (u) \right) \label{eq:Post}
\end{equation}
We will need some further, but rather technical properties later, which are fulfilled for all commonly used convex $\J(u)$.

 \begin{figure}[tb]
\centering
\subfloat[][Gaussian likelihood \label{subfig}]{\includegraphics[width=0.32\textwidth]{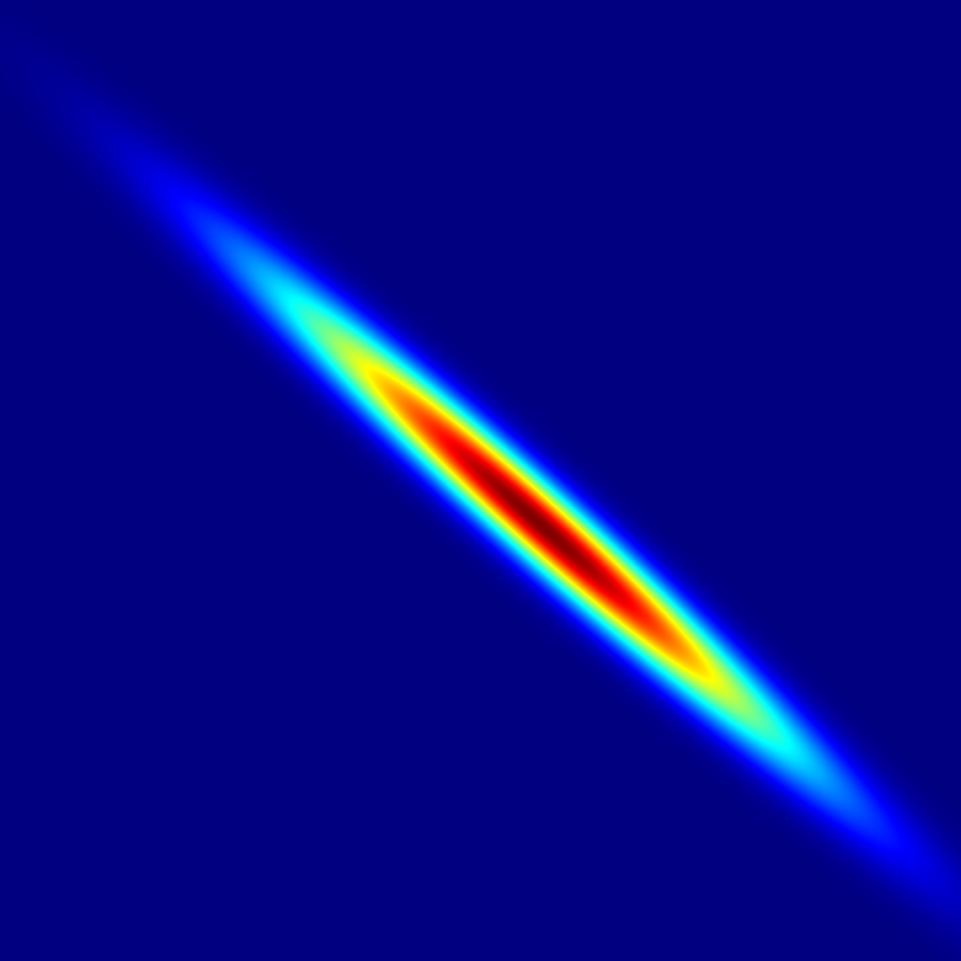}}
\hspace{0.01\textwidth}
\subfloat[Prior: $p(u) \propto \exp (-\lambda | u|_1)$]{\includegraphics[width=0.32\textwidth]{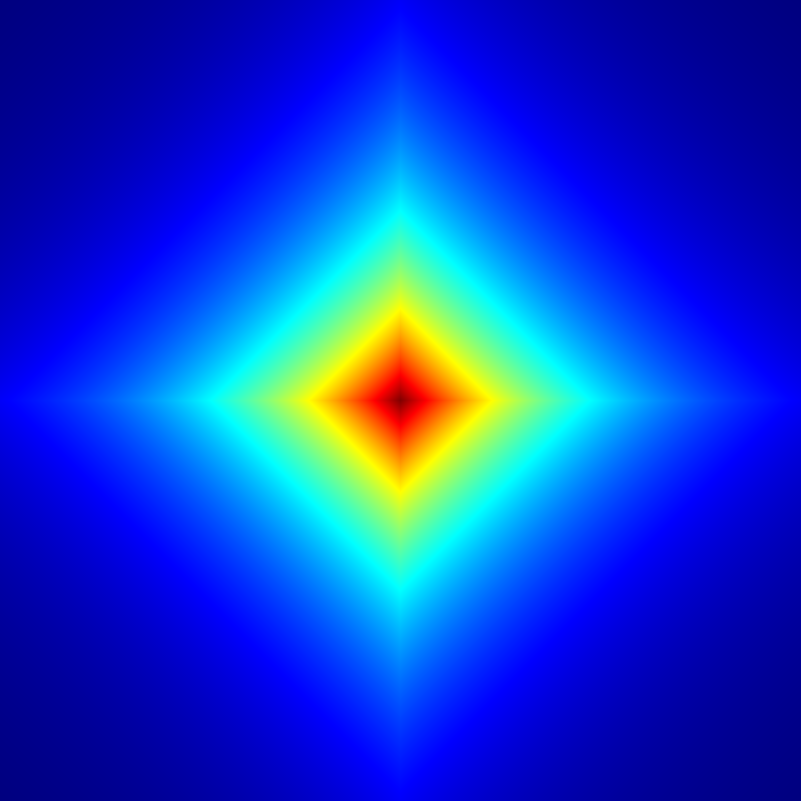}}
\hspace{0.01\textwidth}
\subfloat[Resulting posterior]{\includegraphics[width=0.32\textwidth]{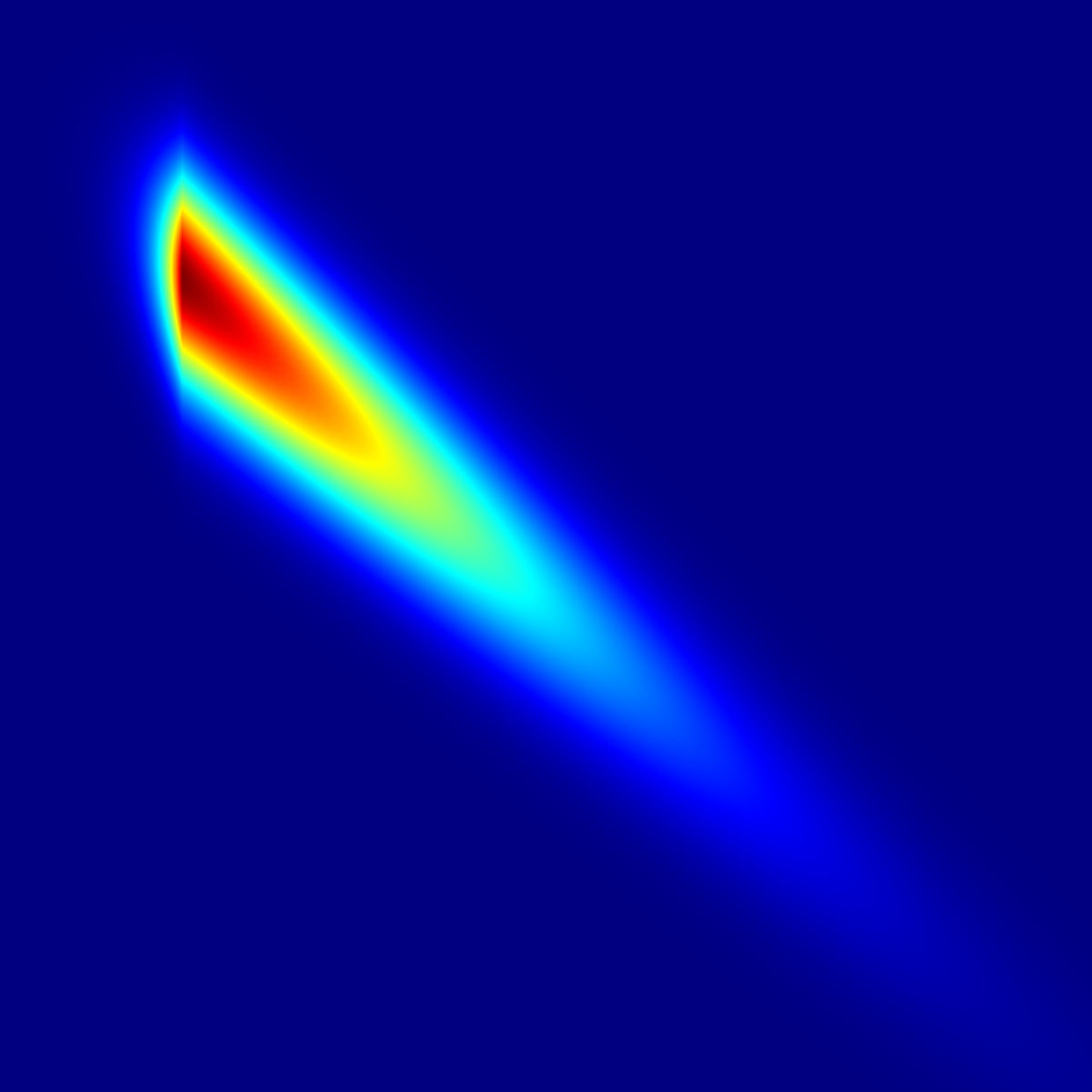}}
\caption{Illustration of possible shapes of likelihood, prior and posterior for $n= 2$.\label{fig:DistPlots}}
\end{figure}

\subsection{MAP vs. CM Estimates: Variational Regularization vs. Bayesian Inference?} \label{subsec:MAPvsCMIntro}
Choosing a single point estimate for $u$ is the most simple but also most commonly used Bayesian inference technique. Two popular estimates are:
   \begin{itemize}
    \item Maximum a-posteriori-estimate (MAP):
      \begin{equation}
      	\uMAP := \argmaxsub{u \in \R^n} \left\lbrace   \; p_{po}(u|f) \right\rbrace \label{eq:MAPDef}
      \end{equation}
	 We can compute $\uMAP$ for \eref{eq:Post} by solving a high-dimensional optimization problem:
	 \begin{equation}
	 \uMAP = \argminsub{u \in \R^n}\left\lbrace \frac{1}{2 } \linorm{  f - K \, u} + \lambda \J (u)  \right\rbrace \label{eq:GenTikh}
	\end{equation}	  
	This is a \emph{Tikhonov-type} regularization of equation \eref{eq:FwdEq1} \cite{EnHaNe96}. Hence, MAP estimation yields a direct correspondence to \emph{variational regularization} techniques \cite{BuOs04,ScKaHoKa12}.
    \item Conditional mean-estimate (CM): 
      \begin{equation}
		\uCM := \Exp \left[ u|f \right] = \int u \; p_{po}(u|f) \; \rmd u \label{eq:CMDef}
      \end{equation}
      Computing $\uCM$ requires solving a high-dimensional \emph{integration} problem \cite{KaSo05,St10}. 
  \end{itemize}
The immediate and obvious question is: What is the difference between MAP and CM estimate? Which of them is ''better'' in general, or for a specific task? This is not only a matter of constant debate within the field of Bayesian inversion, but also with classical regularization theory due to the direct correspondence of $\uMAP$. This article starts with a summary of the ''classical'' view on the issue. Then, several recent computational and theoretical results are discussed, which challenge this point of view. In the last part, new theoretical ideas are introduced that fit to all of these results, disprove certain common myths and will lead to new insights and perspectives for the comparison of variational regularization and Bayesian inference.

\subsection{Sparsity Constraints in Inverse Problems} \label{subsec:SpIP}
\emph{Sparsity constraints} are a type of a-priori information that demand the solution of \eref{eq:FwdEq1} to have very few non-zero coefficients in a suitable representation (i.e., a bases, frames or other dictionaries). Solving high-dimensional inverse problems using sparsity constraints has led to enormous advances in various areas, a popular example being \emph{total variation} (\emph{TV}) deblurring \cite{BuOs13}, based on sparsity constraints on the gradient of the unknown quantity. Commonly, sparsity constraints are formulated in the framework of variational regularization by  choosing $\ell_1$-type norms for constructing $\J(u)$ in \eref{eq:GenTikh}:
\begin{equation}
 \hat{u}_{\alpha} = \argminsub{u \in \R^n}\left\lbrace \frac{1}{2 } \linorm{  f - K \, u } + \lambda \abs{\Phi(u)}_1,  \right\rbrace, \label{eq:SparseTikh}
\end{equation}
where $\Phi(u)$ is a convex function mapping $u$ onto the potentially sparse property, e.g., the local $\ell_2$-norm of its gradient in TV. Recently, using similar sparsity-constraints in Bayesian inversion has attracted considerable attention. There are two common ways to encode sparsity in the prior:
 \begin{enumerate}
  \item Converting the functionals used in \eref{eq:SparseTikh} directly into priors of the form $p_{pr}(u) \propto \exp(-\lambda \abs{\Phi(u)}_1)$ (\emph{$\ell_1$-type priors}). This is convenient, since the prior is log-concave and one already knows that the MAP estimate will be sparse. In this article, we will only present computational results for this type of priors (the theoretical results are, however, valid for all log-concave priors). 
  \item Hierarchical Bayesian modeling (HBM) extends the prior model by an additional level and imposes sparsity on this level. While good results in various applications were obtained with this approach (e.g., \cite{BaCaSo10,LuPuBuWo12,PuKa13,WaJiYuJi13}), a potential difficulty is that the implicit priors over the unknowns are usually not log-concave.
 \end{enumerate}
Figure \ref{fig:RndDraws} shows random images drawn from a Gaussian, a $\ell_1$-type and a Student's t-distribution (a potential implicit prior encountered in HBM). While the visual impression of the $\ell_1$ random image clearly differs from the Gaussian one, it is by no means sparse in the traditional sense. If it would be the true solution, $\uMAP$ would probably not be able to recover it in a satisfactory way. This is due to misconceptions behind the ''reverse engineering'' approach of turning a sparsifying regularization functional $\J(u)$ into a prior $p_{pr} \propto \exp(- \lambda \J(u))$. This has already been noticed in \cite{Gr11,GrCeDa12}. More generally, it points to an inherent difficulty of defining sparsity in the Bayesian framework in a meaningful, consistent and tractable way, which we cannot address further in this article.\\
The paper is structured as follows: In Section~\ref{sec:ClassView}, we will provide a discussion of the comparison between MAP and CM estimates. First, we will revisit the classical view on the problem in Section~\ref{subsec:BayesCost}, which favors the CM estimate on the basis of a theoretical argument: The Bayes cost formalism. While the CM estimate minimizes the mean squared error as a cost function, the MAP estimate is discredited for  minimizing a binary cost function in its degenerate limit. Then, in Section~\ref{subsec:RecRes}, we summarize recent results and observations, which do not fit in this picture. In particular, these results steam from linear inverse problems with sparsity-promoting priors incorporating $\ell_1$-type norms and motivate the further theoretical investigation in Section~\ref{sec:RehabMAP}. There, we will present new, proper, convex Bayes cost functions for both MAP and CM estimates. Using \emph{Bregman distances}, the MAP estimate will become a proper Bayes estimator and the computational observations will not longer be contradictory to the Bayes cost theory. In addition, we present further results that correct common misconceptions about MAP estimates.

 \begin{figure}[tb]
\centering
\subfloat[][ $p(u) \propto \exp \left( - \frac{1}{2} \| u \|_2^2 \right)$]{\includegraphics[width=0.32\textwidth]{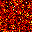}}
\hspace{0.01\textwidth}
\subfloat[][$p(u) \propto \exp\left( -  | u |_1 \right)$]{\includegraphics[width=0.32\textwidth]{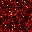}}
\hspace{0.01\textwidth}
\subfloat[][$p(u_i) \propto  (1 + u_i^2/3)^{-2}$]{\includegraphics[width=0.32\textwidth]{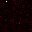}}
\caption{Random images draws from different prior distributions. Here, $u$ was assumed to correspond to a 2D image of $n = 32 \times 32$ pixels $u_1,\ldots,u_{(32^2)}$.\label{fig:RndDraws}}
\end{figure}


\section{MAP vs. CM Estimates} \label{sec:ClassView}

In the following we start by briefly discussing the established viewpoint on the comparison of MAP and CM estimates and subsequently review several recent results converse to this viewpoint, supplemented by some computational experiments. 
 \begin{figure}[tb]
\centering
\includegraphics[height = 4.5cm]{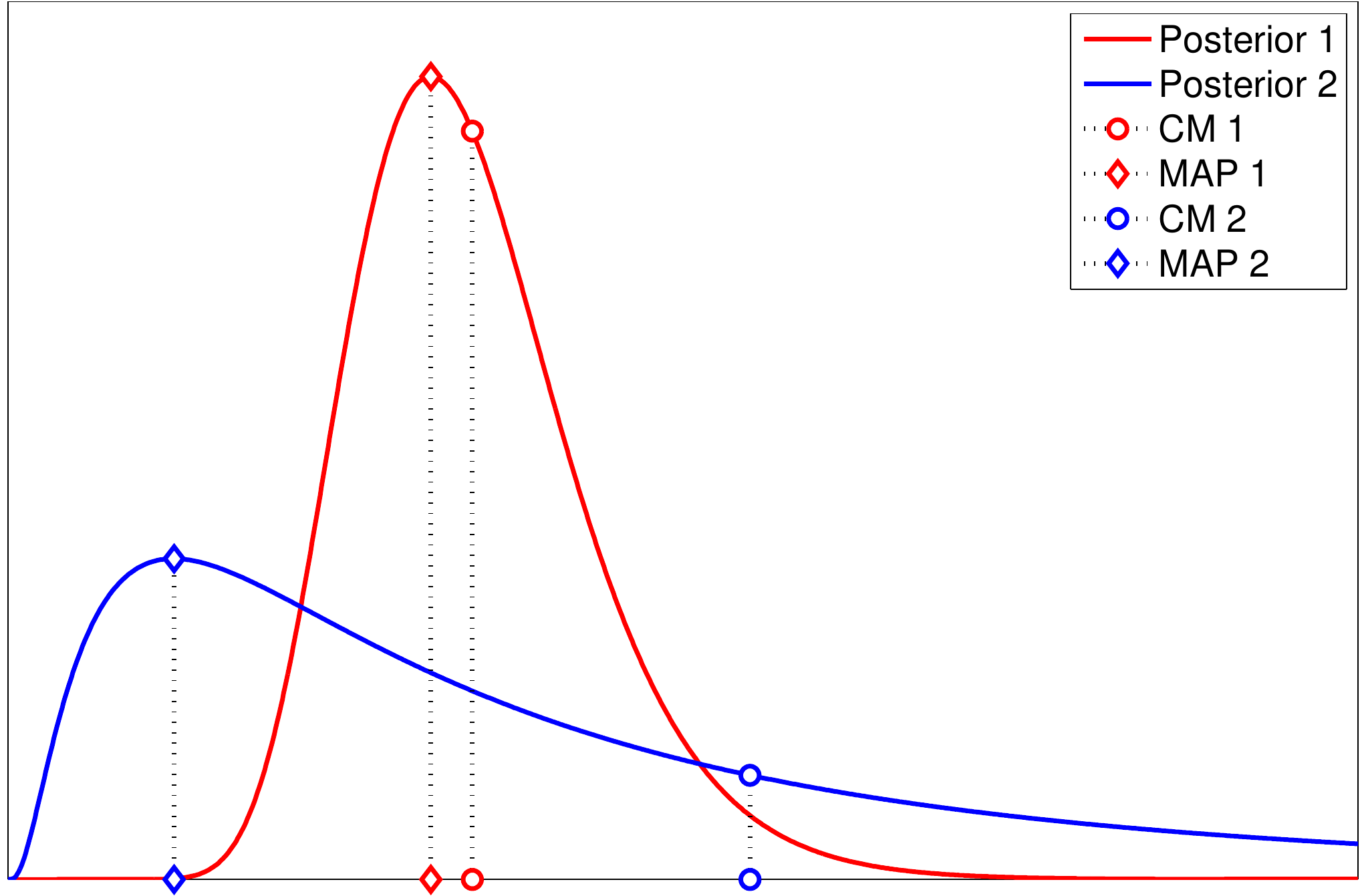}
\caption{Comparison of MAP and CM estimates for two posterior densities.  \label{fig:NaiveCMvsMAP}}
\end{figure}
The CM estimate is the mean of the posterior, while the MAP estimate is the (highest) mode of the posterior (see Figure \ref{fig:NaiveCMvsMAP}). However, this does not provide any intuition  why one of them should be better suited to represent a distribution. Hence, a lot of presentations of the topic provide plots of hypothetical distributions like Figure \ref{fig:PlotsCMvsMAP} to show that none of them is better in general. However, one might argue that the CM estimator as the mean value is an intuitive choice as it is the ''center of (probability) mass'' and corresponds to the average of a sample, familiar from every-day descriptive statistics.
\begin{figure}[tb]
   \centering
   \subfloat[][]{\includegraphics[width=0.49\textwidth]{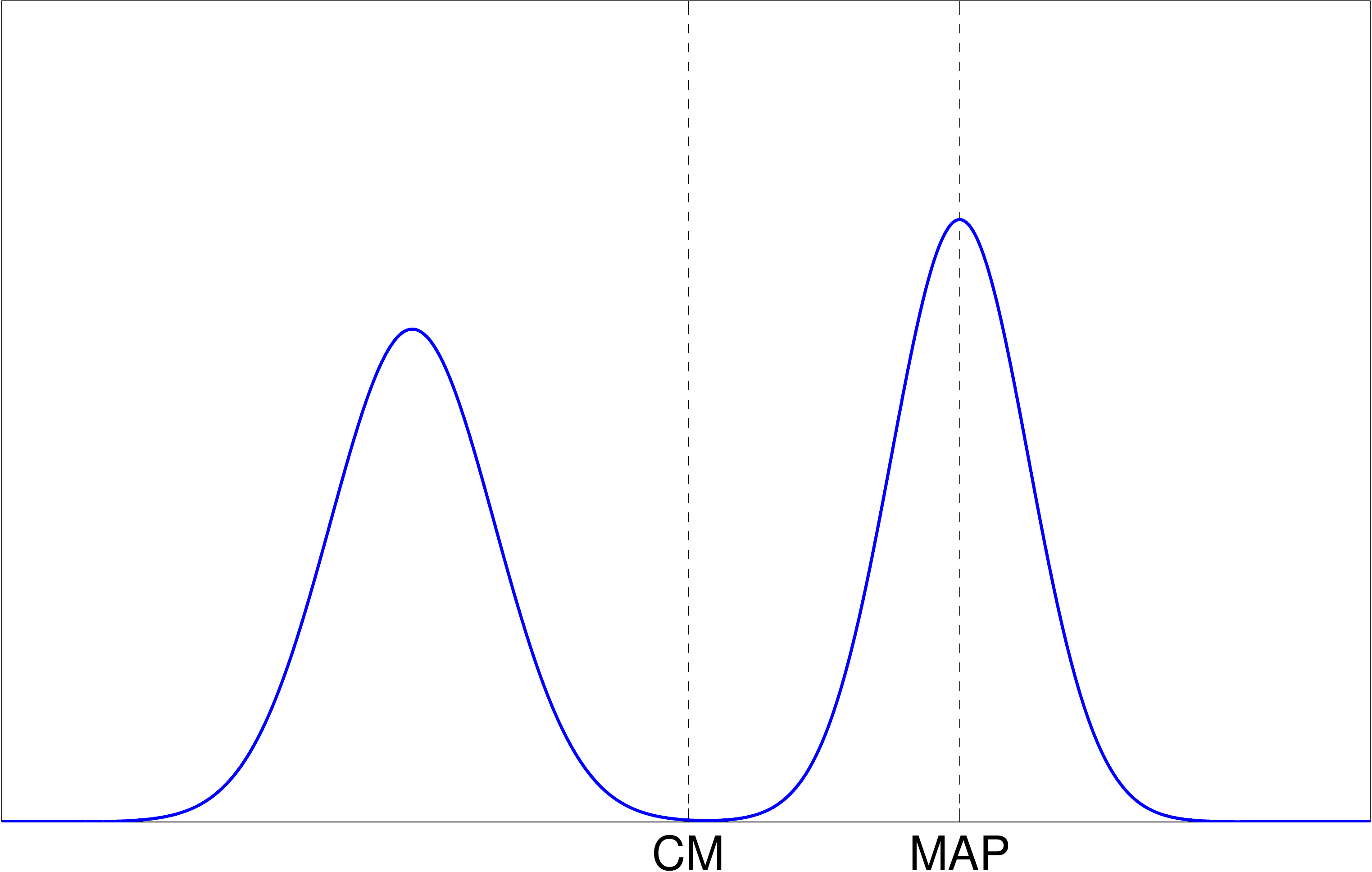}}
   \hspace{0.01\textwidth}
   \subfloat[][]{\includegraphics[width=0.49\textwidth]{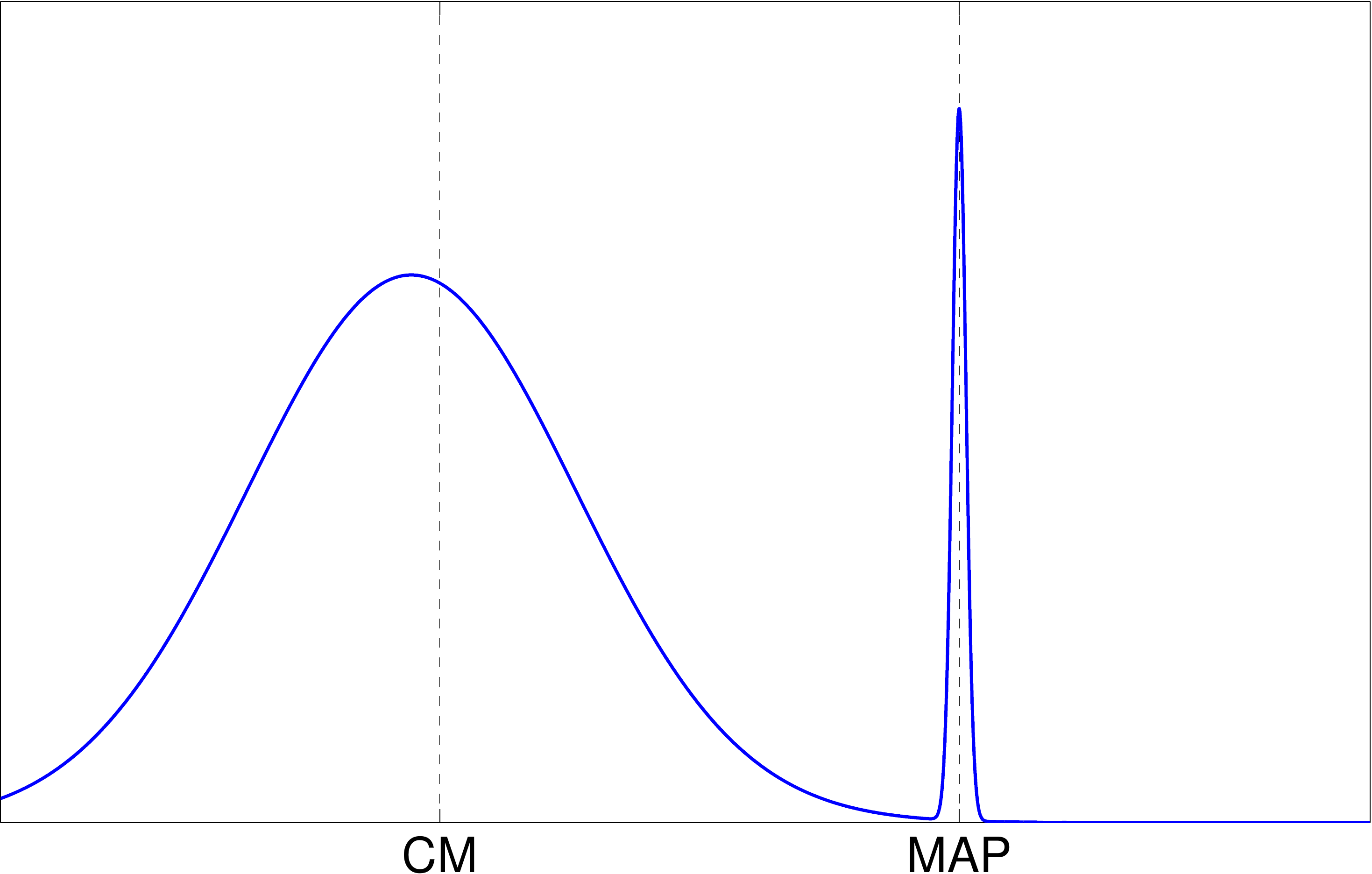}}
   \caption{Hypothetical, bimodal distributions to show that none of the estimates is better in general. \label{fig:PlotsCMvsMAP}}
\end{figure}

 \subsection{Bayes Cost Formalism} \label{subsec:BayesCost}
As the illustrative comparison does not give any useful intuition, the Bayes cost formalism is usually used to provide a decisive theoretical argument, which we recall in the following (cf. \cite{KaSo05,Ka98}). In the Bayesian framework, an estimator $\hat{U}$ is a random variable as well, as it relies on $F$ and $U$. \emph{Statistical estimation theory} (respectively Bayesian decision theory) examines the general behavior of estimators to find optimal estimators for a given task. A common approach is to define a \emph{cost function} $\Psi(u,\hat{u})$ measuring the desired and undesired properties of $\hat{u}$. The \emph{Bayes cost} is defined by the expected cost, i.e., the average performance: 
\begin{eqnarray}
\fl BC_\Psi(\hat{u}) := \Exp\left[ \Psi(u,\hat{u}(f)) \right] &= \int \int \Psi(u,\hat{u}(f)) \: p(u,f) \: \rmd u \: \rmd f \nonumber \\
	&= \int \int \Psi(u,\hat{u}(f)) \: p_{like}(f|u) \: \rmd f \: p_{prior}(u) \: \rmd u \nonumber\\
	&\eqab{\eref{eq:BayesRule}} \int \int \Psi(u,\hat{u}(f)) \: p_{po}(u|f) \: \rmd u \: p(f) \rmd f \label{eq:BayesCost}
\end{eqnarray}
The \emph{Bayes estimator} $\hat{u}_\Psi$ is the estimator, which minimizes $BC_\Psi(\hat{u})$.
\begin{equation*}
\hat{u}_\Psi := \argminsub{\hat{u}} \left\lbrace BC_\Psi(\hat{u}) \right\rbrace
\end{equation*}
In \eref{eq:BayesCost}, $\hat{u}(f)$ only depends on $f$ and the marginal density $p(f)$ is non-negative. Thus, $\hat{u}_\Psi$ also minimizes
\begin{equation}
\hat{u}_\Psi(f) = \argminsub{\hat{u}} \left\lbrace \int \Psi(u,\hat{u}(f)) \: p_{po}(u|f) \: \rmd u \right\rbrace \label{eq:BayesEstimator}
\end{equation}
The main classical arguments in favour of CM and against MAP estimates derived from the Bayes cost formalism are as follows:
\begin{itemize}
 \item The CM estimate is Bayes estimator for the mean squared error
 \begin{equation}
 \Psi_{\text{\tiny MSE}}(u,\hat{u}) = \sqnorm{u - \hat{u}}, \label{eq:MSE}
 \end{equation}
 which seems to be a very natural and reasonable choice for $\Psi$. Interpreted geometrically, one also speaks of a "well-centeredness" of $p_{po}(u|f)$ around $\uCM$. As it is by default unbiased with respect to $p_{po}(u|f)$, one can further show that $\uCM$ is also the \emph{minimum error variance estimator}.
 \item On the other hand, the MAP estimate can only be seen as an \emph{asymptotic} Bayes estimator of
 \begin{equation}
 \Psi_{\delta}(u,\hat{u}) = \cases{0,& if $\|u - \hat{u} \|_\infty \leqslant \delta$\\
1& otherwise\\} \label{eq:UniCost}
\end{equation}
for $\delta \rightarrow 0$ (\emph{uniform cost} or \emph{0-1 loss}). Thus, it is usually not considered a proper Bayes estimator. This characterization also does not seem to allow for an intuitive geometrical interpretation of $\uMAP$ akin to the one for $\uCM$.
\end{itemize}
The theoretical differences between MAP and CM estimates seem to match their practical differences, in particular the different complexity of their computation (cf. Section~\ref{subsec:MAPvsCMIntro}). The theoretical discrimination of the MAP estimate contrasts its success in practical applications, in particular in high-dimensional scenarios. Therefore, one often encounters a strange contrariness in articles about high-dimensional Bayesian inversion: Usually a careful prior modeling is presented, and the CM estimate is regarded as the optimal inference technique. However, for computational reasons, often only a MAP can be computed. This circumstance is usually regretted and excused for. If the computational results are not fully satisfactory, shortcomings of the MAP estimate are discussed as a potential reason for it. However, even if the results are really good, concern is expressed that computing MAP estimates is not a proper Bayesian technique.

\subsection{Converse Results} \label{subsec:RecRes}

In the following we discuss some recent theoretical results and computational experiments indicating that CM estimates are not superior in particular for high-dimensional inversion. 
We start with the Gaussian case: Gaussian priors are the most popular and arguably the most fundamental class of priors one can consider for \eref{eq:Post}, due to various reasons such as their maximum entropy property, alpha-stability and the central limmit theorem. However, for this most fundamental class of priors, the seemingly fundamentally different MAP and CM estimate happen to be equal. From the classical view, this can only be interpreted as a meaningless coincidence, which is arguably not fully satisfactory. One might argue however that a quadratic Bayes cost is perfectly suited for Gaussian models, since it corresponds well to the negative logarithm of the prior. An appropriately scaled square-norm Bayes cost as used above should obviously be quite robust also for high-dimensional problems in the case of Gaussian priors, while this is not clear at all for other priors. For nonquadratic priors asymptotically concentrating on some Banach space it is not at all clear whether there is a robust and asymptotically meaningful squared norm in the Bayes cost criterion, hence it might be reasonable to think about other costs better suited to the Banach space limit. We will further dwell upon this issue in the Section~\ref{sec:RehabMAP}, and continue with results from computational experiments with $\ell_1$ type priors (cf. Section~\ref{subsec:SpIP}). We will use the Split Bregman method \cite{GoOs09} for computing MAP estimates and a specific MCMC scheme we developed in \cite{Lu12} for computing CM estimates.

\subsubsection{A 2D Deblurring Example} \label{subsubsec:2DImDeb}
\begin{figure}[tb]
\centering
\subfloat[][]{
\fbox{\includegraphics[height = 0.45\textwidth]{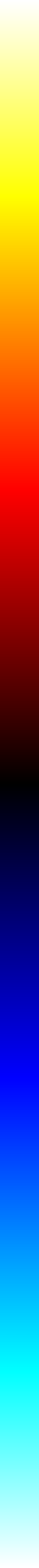}}
}
\hspace{0.01\textwidth}
\subfloat[][Unknown function $\tilde{u}$]{\includegraphics[height = 0.45\textwidth]{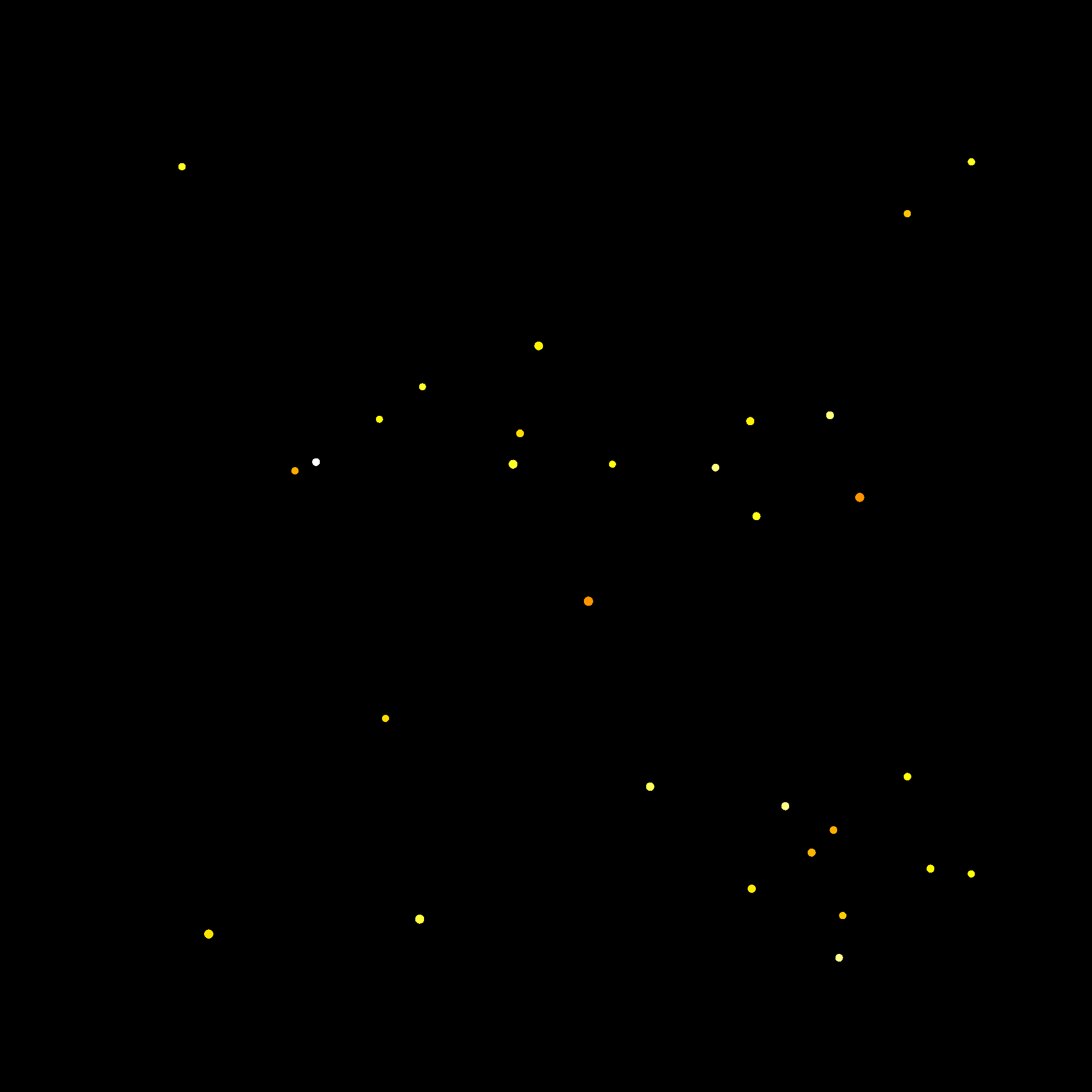} \label{subfig:2Dreal}}
\hspace{0.01\textwidth}
\subfloat[][ Data $f$]{\includegraphics[height = 0.45\textwidth]{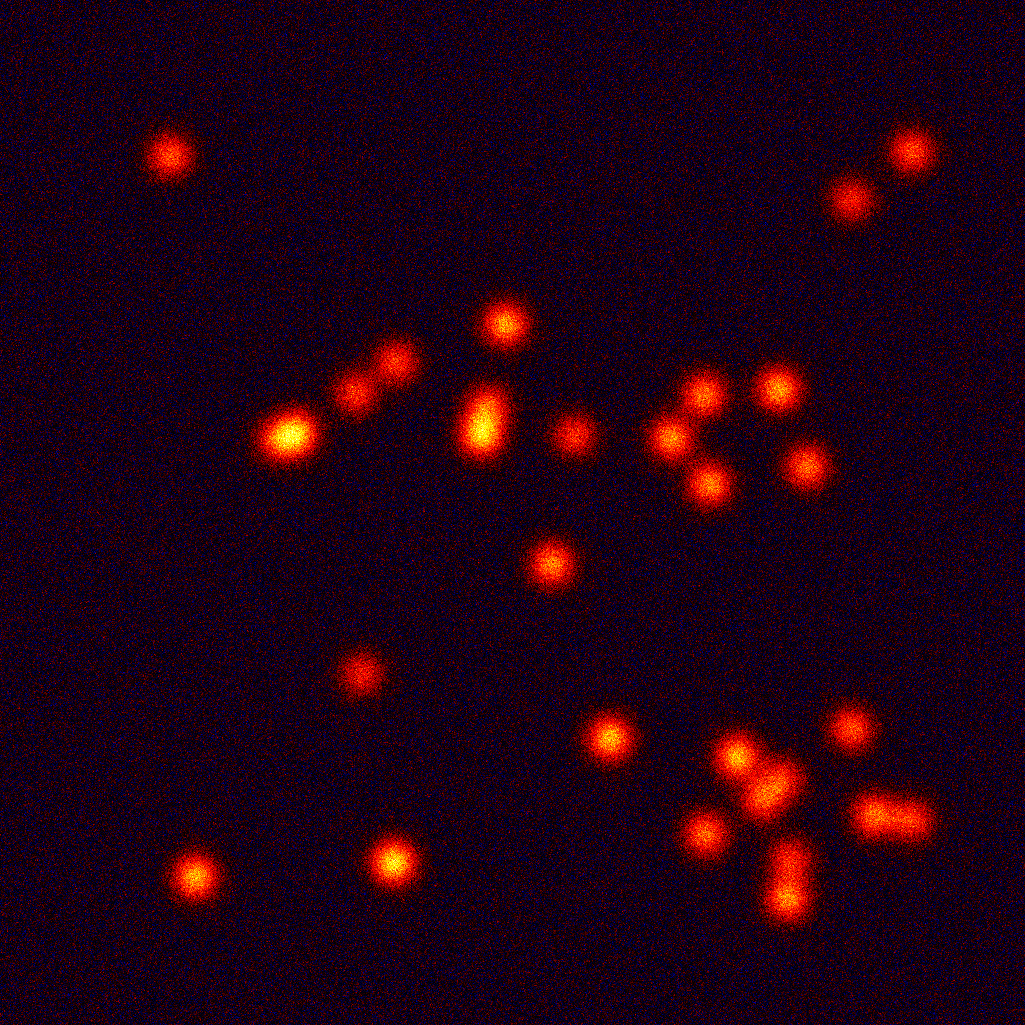} \label{subfig:2Ddata}}
\caption{A simple 2D deblurring example.}
\end{figure}
\begin{figure}[bt]
   \centering
\subfloat[][]{\fbox{\includegraphics[height = 0.45\textwidth]{cool2hot.png}}}
\hspace{0.01\textwidth}
\subfloat[][CM estimate]{\includegraphics[width=0.45 \textwidth]{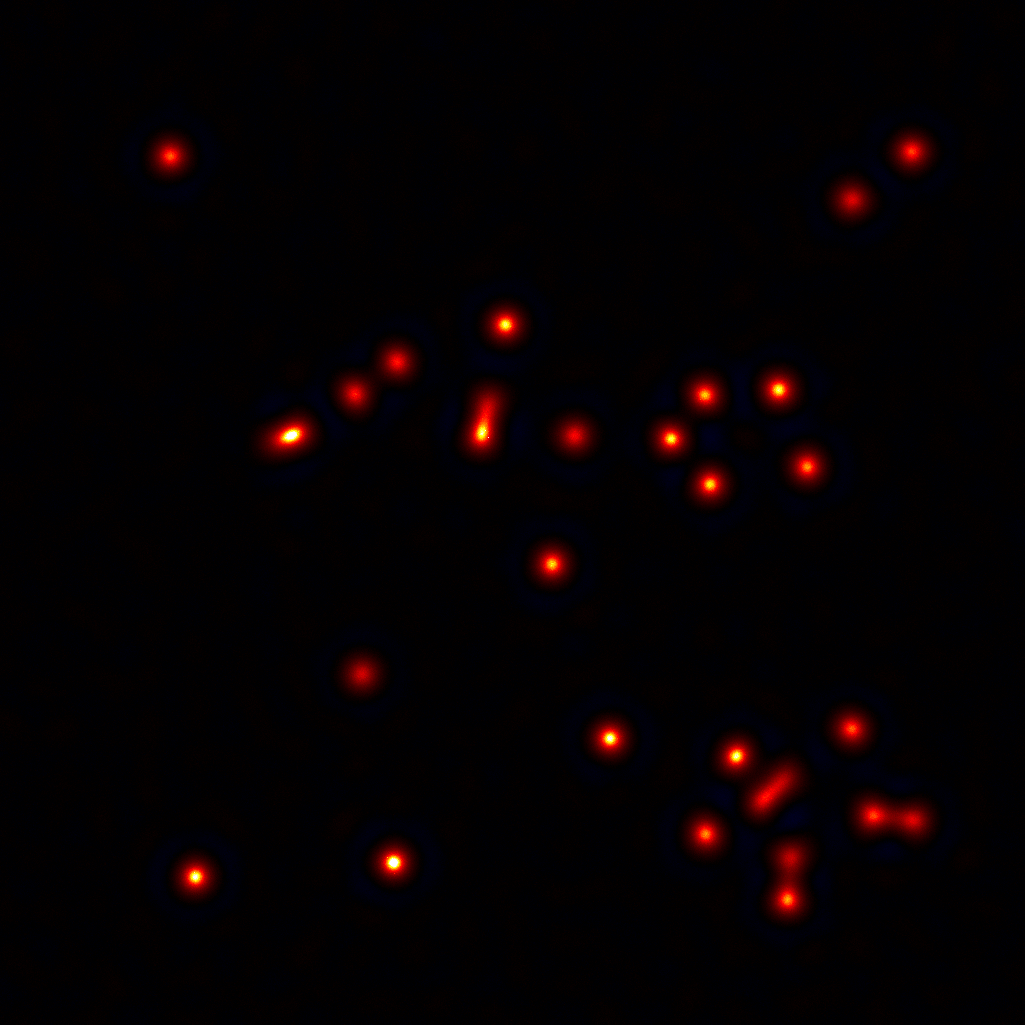}}
\hspace{0.01\textwidth}
\subfloat[][MAP estimate]{\includegraphics[width=0.45 \textwidth]{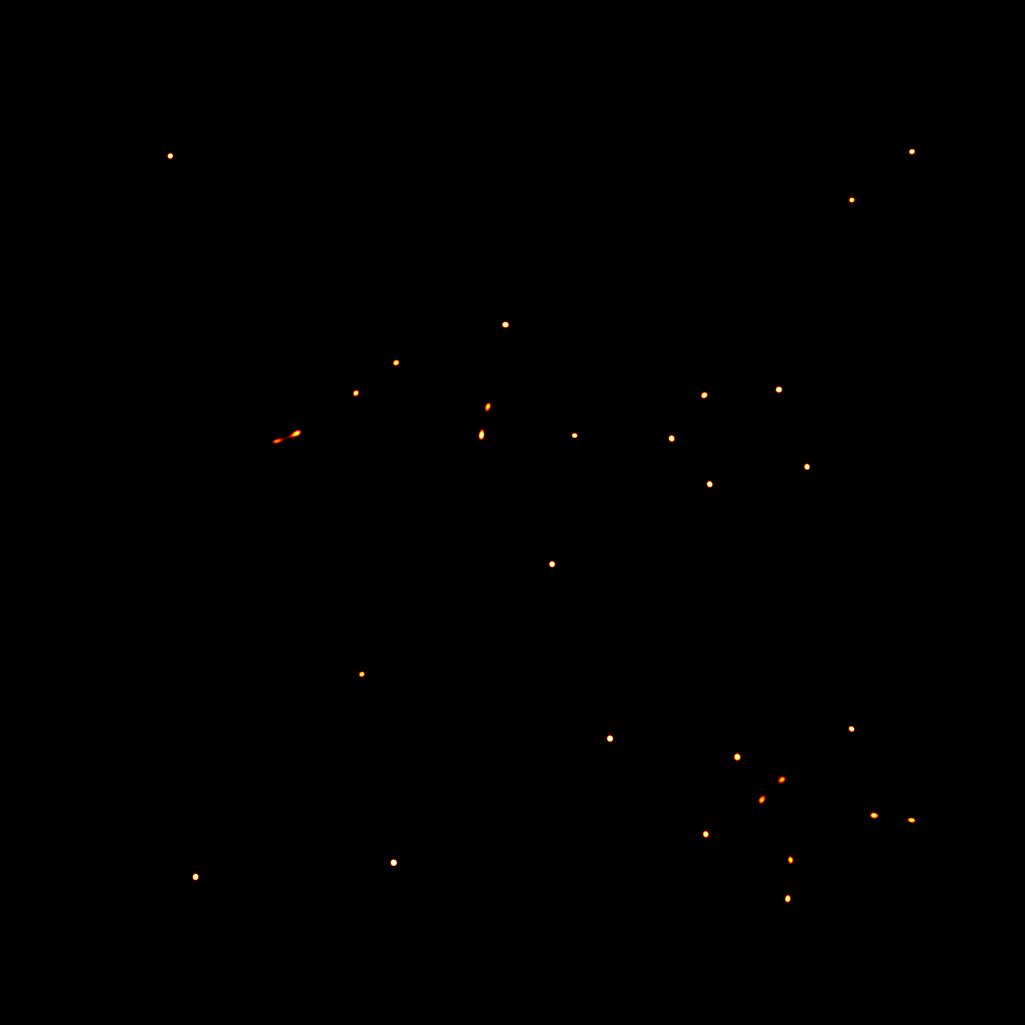}}
\caption{CM and MAP estimate for the 2D deblurring example. \label{fig:2DCMvsMAP}}
\end{figure}
We start with simple 2D image deblurring using the $\ell_1$ prior,  i.e., $p_{pr}(u) \propto \exp (-\lambda |u|_1)$. The unknown intensity function $\tilde{u}:[0,1]^2 \rightarrow \mathbb{R}_+$ consists of circular spots of constant intensity whose radii and intensities slightly vary between single spots (see Figure \ref{subfig:2Dreal}). It is convoluted with a Gaussian kernel (standard deviation of 0.015), integrated over $1025 \times 1025$ regular pixels and contaminated by noise ($\sigma = 0.1 \cdot \norm{K\tilde{u}}_\infty$). The resulting measurement data is displayed in Figure \ref{subfig:2Ddata}. The image will be reconstructed using the $\ell_1$ prior,  i.e., $p_{pr}(u) \propto \exp (-\lambda |u|_1)$ on the same pixel grid used for the measurement. To avoid an inverse crime \cite{KaSo05}, the grid used for the generation of the measurement data was four times finer. The results are shown in Figure \ref{fig:2DCMvsMAP}. While the MAP estimate is very close to $\tilde u$, the CM estimate is blurred and is not able to separate all intensity spots.

\subsubsection{The Discretization Dilemma of the TV Prior}
\begin{figure}[bt]
   \centering
\subfloat[][Unknown function $\tilde{u}$ \label{subfig:TVReal}]{\includegraphics[width = 0.49\textwidth]{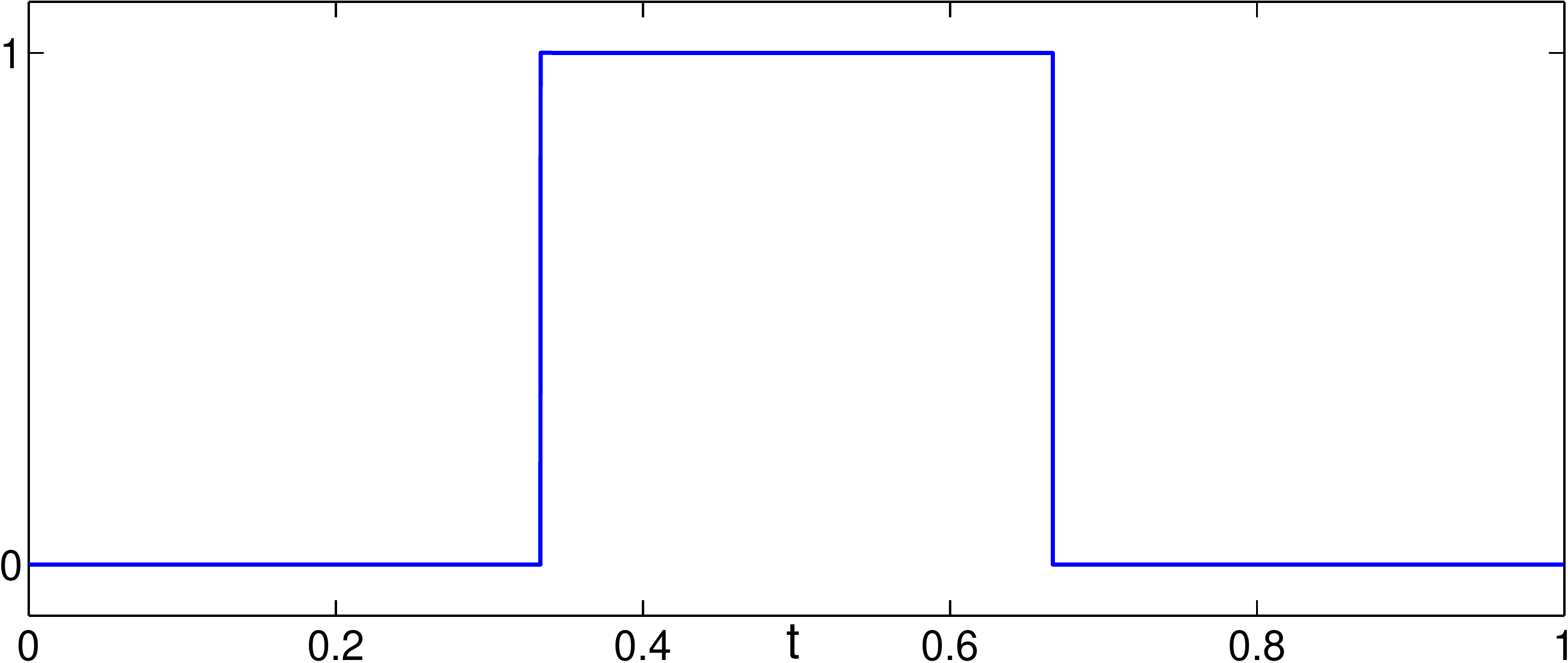}}
\hspace{0.01\textwidth}
\subfloat[][ Data $f$ \label{subfig:TVData}]{\includegraphics[width = 0.49\textwidth]{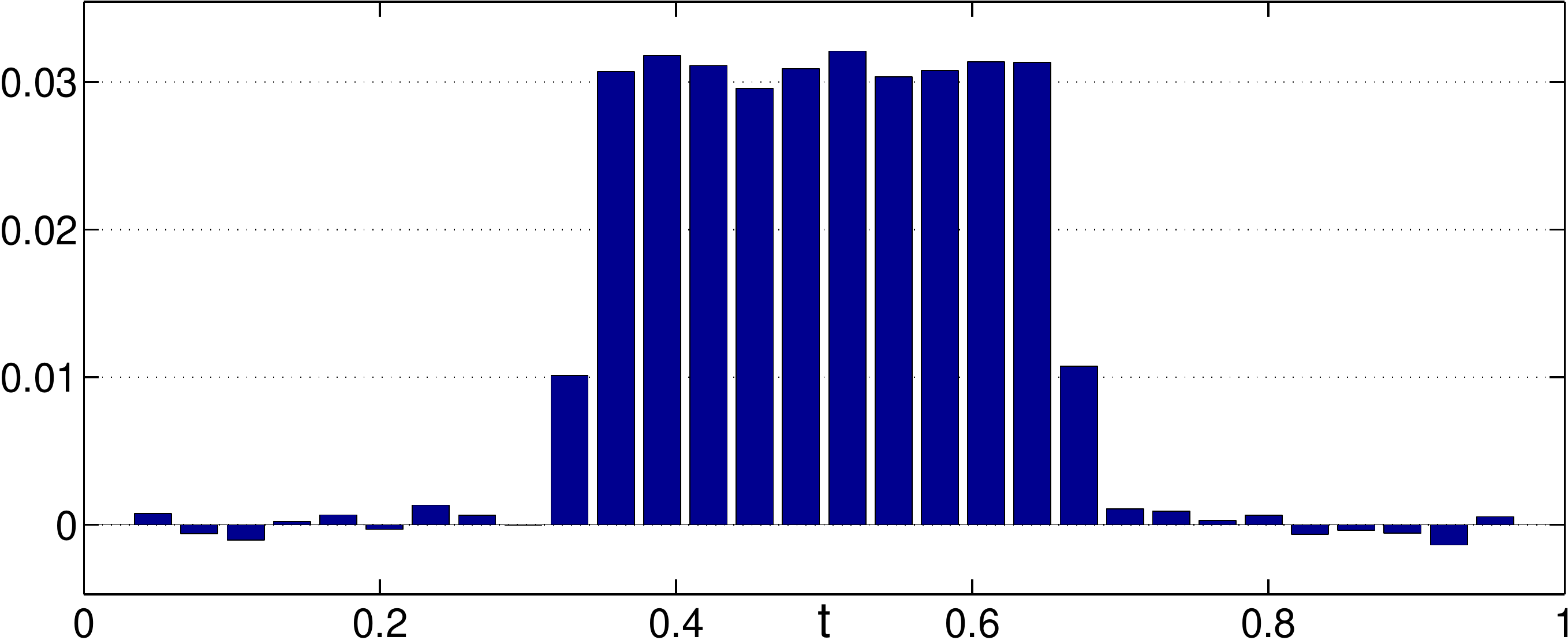}}
\caption{A simple 1D edge-preserving image reconstruction scenario.}
   \label{fig:TVscenario}
\end{figure}

As a second example, we choose a basic 1D scenario to examine \emph{edge-preserving} image reconstruction using the TV prior, i.e., $p_{pr}(u) \propto \exp \left( - \lambda_n \sum_{i=1}^{n-1} |u_{i+1} - u_i| \right)$, where $u_i:= u(t_i)$. We indexed $\lambda_n$ by $n$ to stress that we will choose it depending on the discretization level $n$. The unknown (light intensity) function $\tilde{u}: [0,1] \rightarrow \R_+$ is the indicator function of $[{\small \frac{1}{3}},{\small \frac{2}{3}}]$, see Figure \ref{subfig:TVReal}. It is piecewise integrated over $m$ equidistant intervals (to mimic a measurement by CCD pixels) and contaminated with noise (see Figure \ref{subfig:TVData}). Further details can be found in \cite{Lu12}.\\
This is a toy model for imaging applications where the task is to reconstruct an intensity image that is known to consist of piecewise homogeneous parts with sharp edges( cf. \cite{BuOs13}). In Bayesian inversion, the use of TV priors stimulated interesting developments: In \cite{LaSi04} it was shown that it is not possible to formulate the TV prior in a \emph{discretization invariant} way, i.e., such that the posterior converges to a well defined limit probability density when $n$ is increased while still reflecting the a priori information of edge-preservation. To summarize their results:
\begin{itemize}
\item For $n \rightarrow \infty$ the posterior only converges for $\lambda_n \propto \sqrt{n}$. However, its limit is a Gaussian smoothness prior and the CM estimate converges to a smooth limit while the MAP estimate converges to constant function. This is illustrated in Figure \ref{fig:TVsqrt}, where we computed CM and MAP estimates up to $n = 2^{16}-1$.
\item For $\lambda_n = const.$ and $n \rightarrow \infty$ both posterior and CM estimate diverge, while the MAP estimate converges to an edge-preserving limit, see Figure \ref{fig:TVconst}. Figure \ref{subfig:TVconstZoom} shows a zoom to clarify the divergence of the CM estimate while Figure \ref{subfig:TVconstCheck} demonstrates that this is not an error of the MCMC sampler to compute it by comparing two CM estimates computed from independent MCMC chains. 
\end{itemize}

\begin{figure}[tb]
   \centering
\subfloat[][CM]{\includegraphics[height = 0.49 \textwidth]{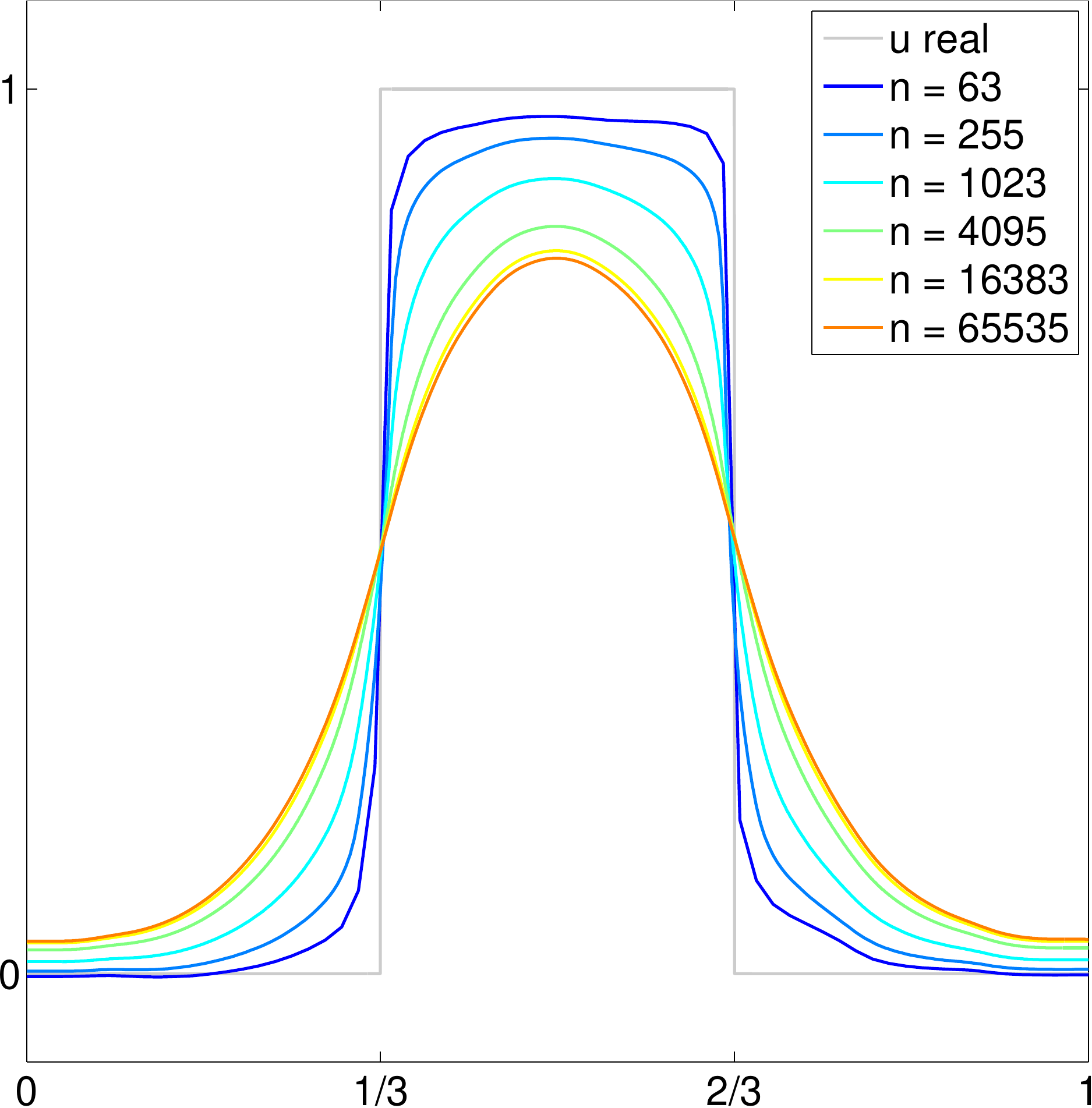}}
\hspace{0.01\textwidth}
\subfloat[][MAP]{\includegraphics[height = 0.49 \textwidth]{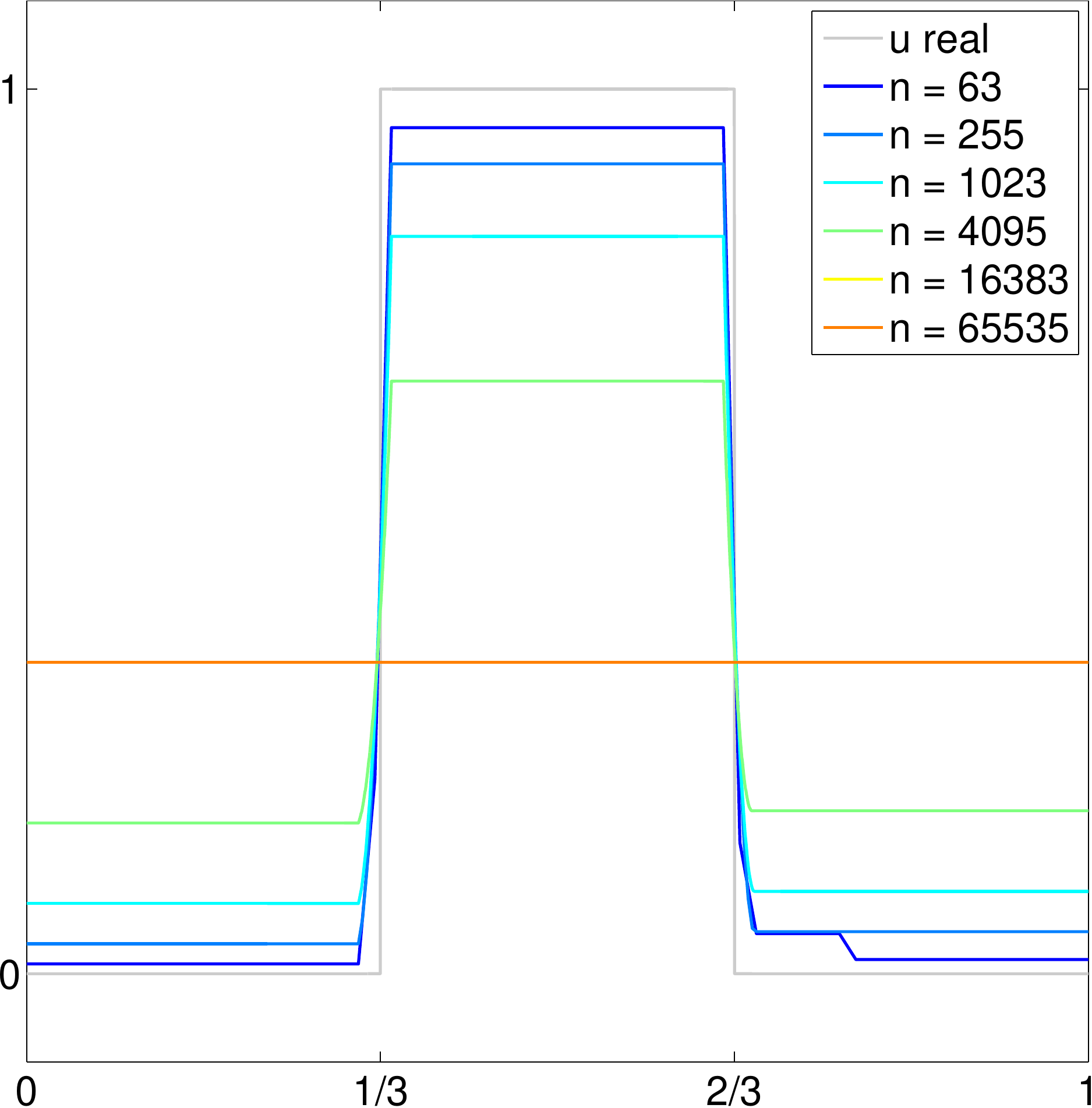}}
\caption{CM and MAP estimates (colored lines) for $\lambda_n \propto  \sqrt{n+1}$ and increasing values of $n$ vs. real solution $\tilde{u}(t)$ (gray line) \label{fig:TVsqrt}}
\end{figure}

\begin{figure}[tb]
   \centering
\subfloat[][CM]{\includegraphics[height = 0.49 \textwidth]{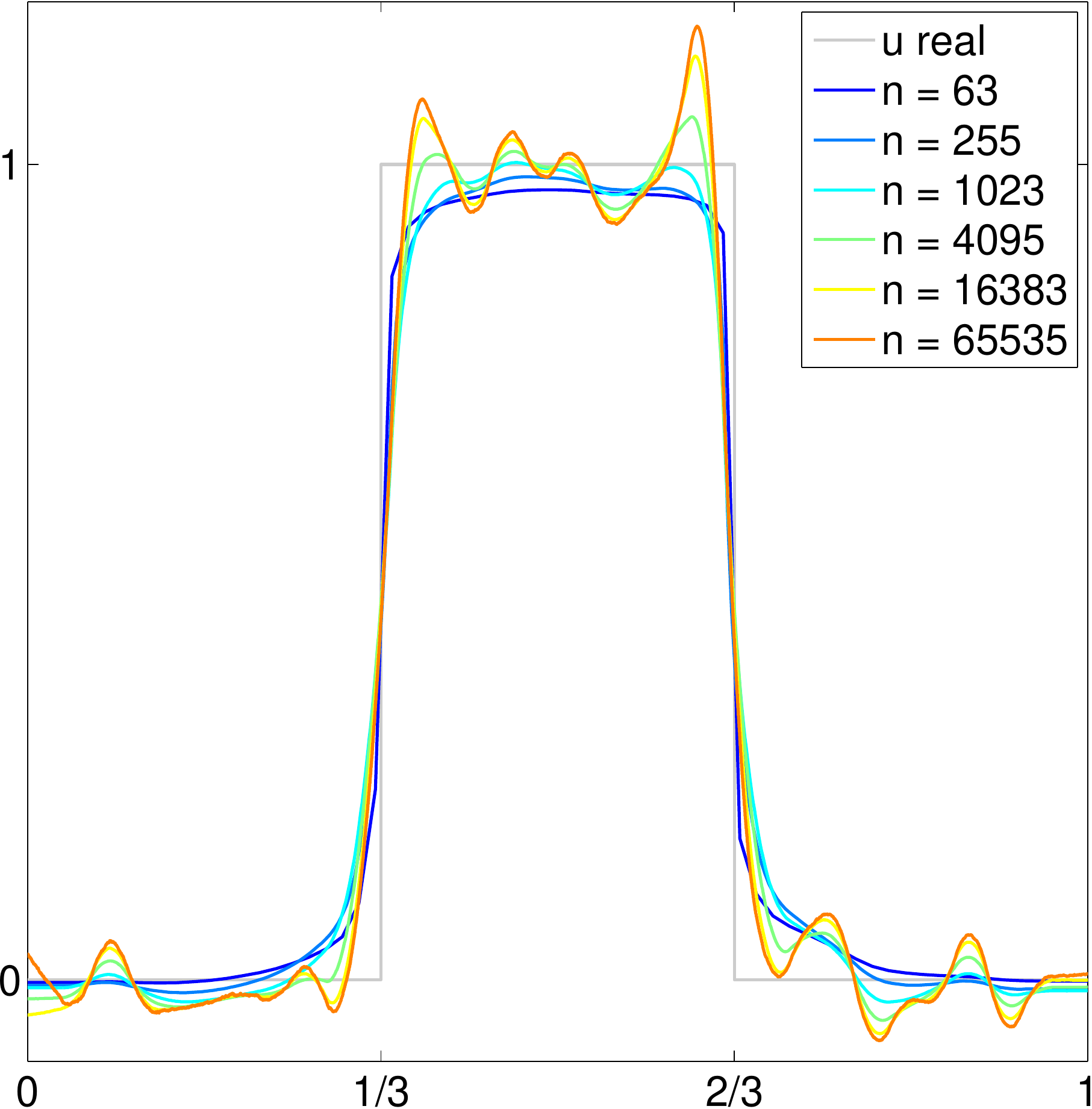}}
\hspace{0.01\textwidth}
\subfloat[][MAP]{\includegraphics[height = 0.49 \textwidth]{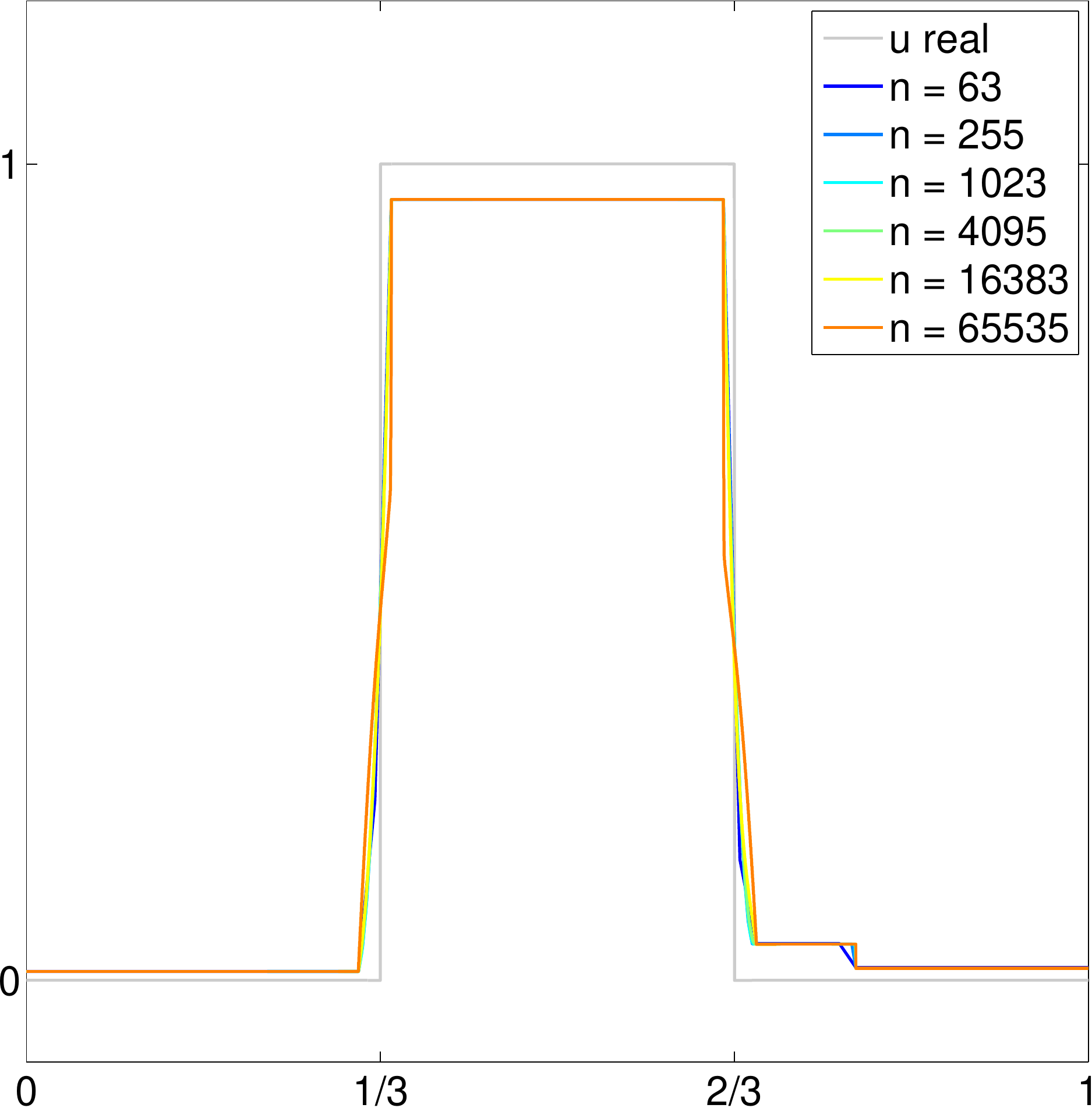}}
\caption{CM and MAP estimates  (colored lines) for $\lambda_n = const.$ and increasing values of $n$ vs. real solution $\tilde{u}(t)$ (gray line) \label{fig:TVconst}}
\end{figure}

\begin{figure}[hbt]
   \centering
\subfloat[][Zoom into CM estimates in Figure \ref{fig:TVconst} \label{subfig:TVconstZoom}]{\includegraphics[height = 0.49 \textwidth]{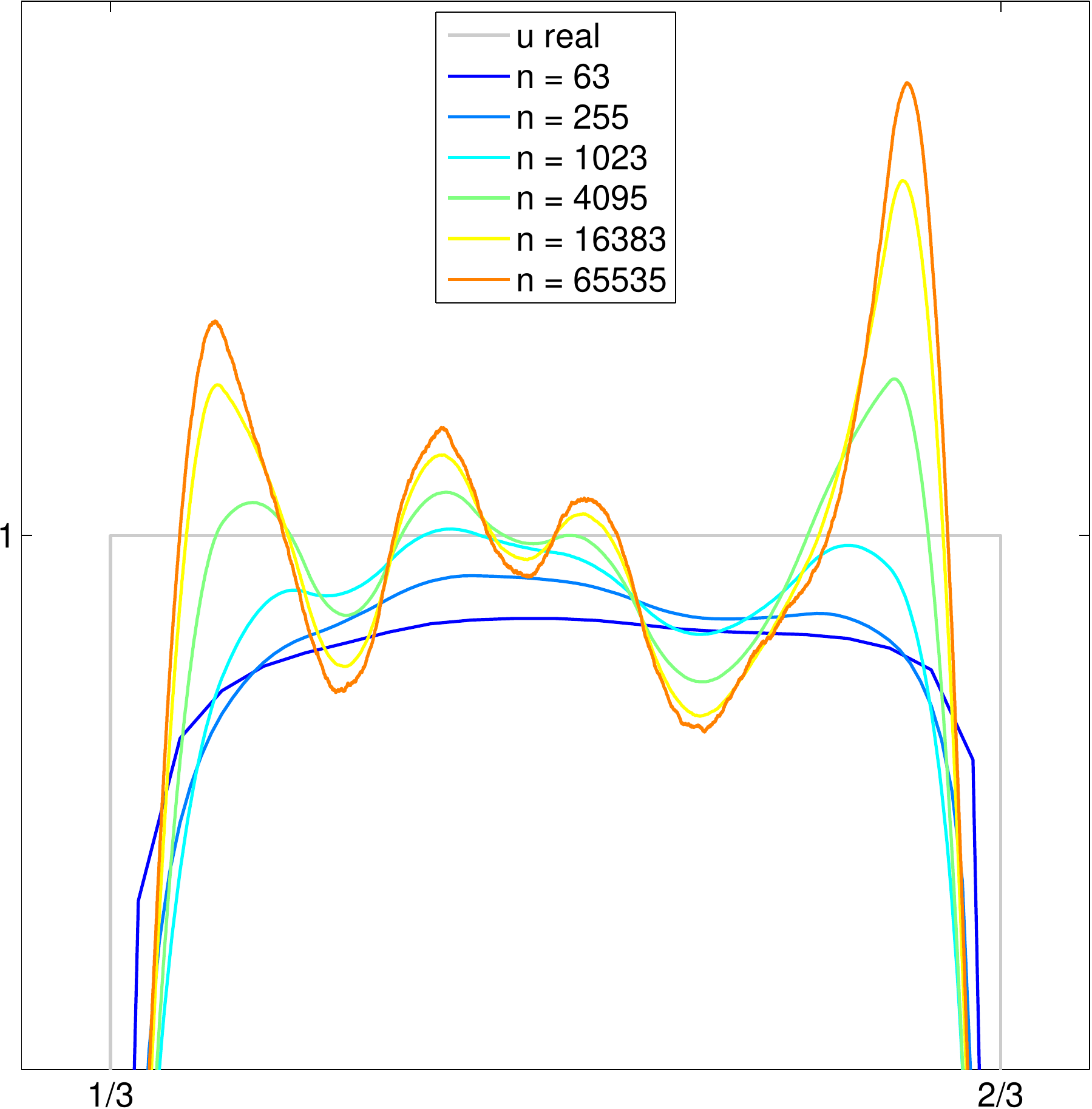}}
\hspace{0.01\textwidth}
\subfloat[][MCMC convergence check \label{subfig:TVconstCheck}]{\includegraphics[height = 0.49 \textwidth]{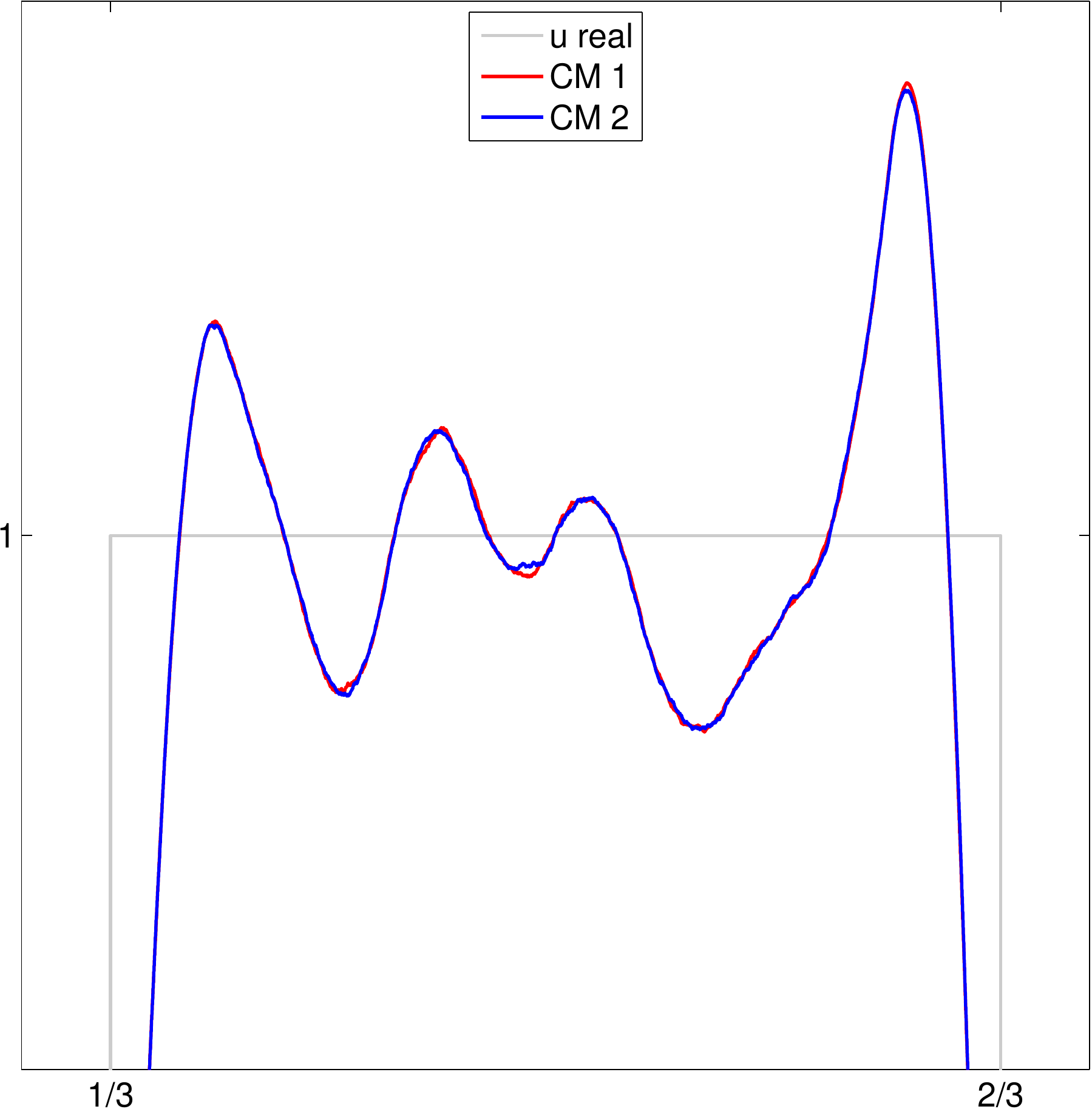}}
\caption{Details of CM estimates in Figure \ref{fig:TVconst}.}
\end{figure}

\subsubsection{Limited Angle CT with Besov Priors}
The discretization dilemma of the TV prior led to a search for edge-preserving and discretization invariant priors. In \cite{LaSaSi09,KoLaNiSi12,HmKaKoLaNiSi13} \emph{Besov space priors}, which rely on a weighted $\ell_1$ norm of wavelet basis coefficients were shown to have these properties. Similar to \cite{HmKaKoLaNiSi13}, we will use Besov priors to reconstruct the Shepp-Logan phantom image $\tilde{u}$ (see Figure \ref{subfig:CTreal}) from the integration of its Radon-projections  into $500$ noisy measurement pixel ($\sigma = 0.01 \cdot \norm{K\tilde{u}}_\infty$) using only $45$ projection angles (see Figure \ref{subfig:CTdata}). In Figure \ref{fig:CTcomp}, CM and MAP estimates for increasing $n$ are shown. As in \cite{KoLaNiSi12}, a Haar wavelet base was used to implement the Besov prior and $\lambda$ was chosen by the \emph{S-curve} method, i.e., depending on the sparsity of the true solution. While a closer examination of such scenarios with real data is subject to future research, for this article, the comparison between CM and MAP estimates is most important: In line with \cite{LaSaSi09,KoLaNiSi12,HmKaKoLaNiSi13},
 we can confirm that both estimates converge in the limit of $n \rightarrow \infty$ and, more importantly, that they are nearly identical. 
\begin{figure}[h]
   \centering
\subfloat[][Unknown function $\tilde{u}$ \label{subfig:CTreal}]{\includegraphics[height = 0.45 \textwidth]{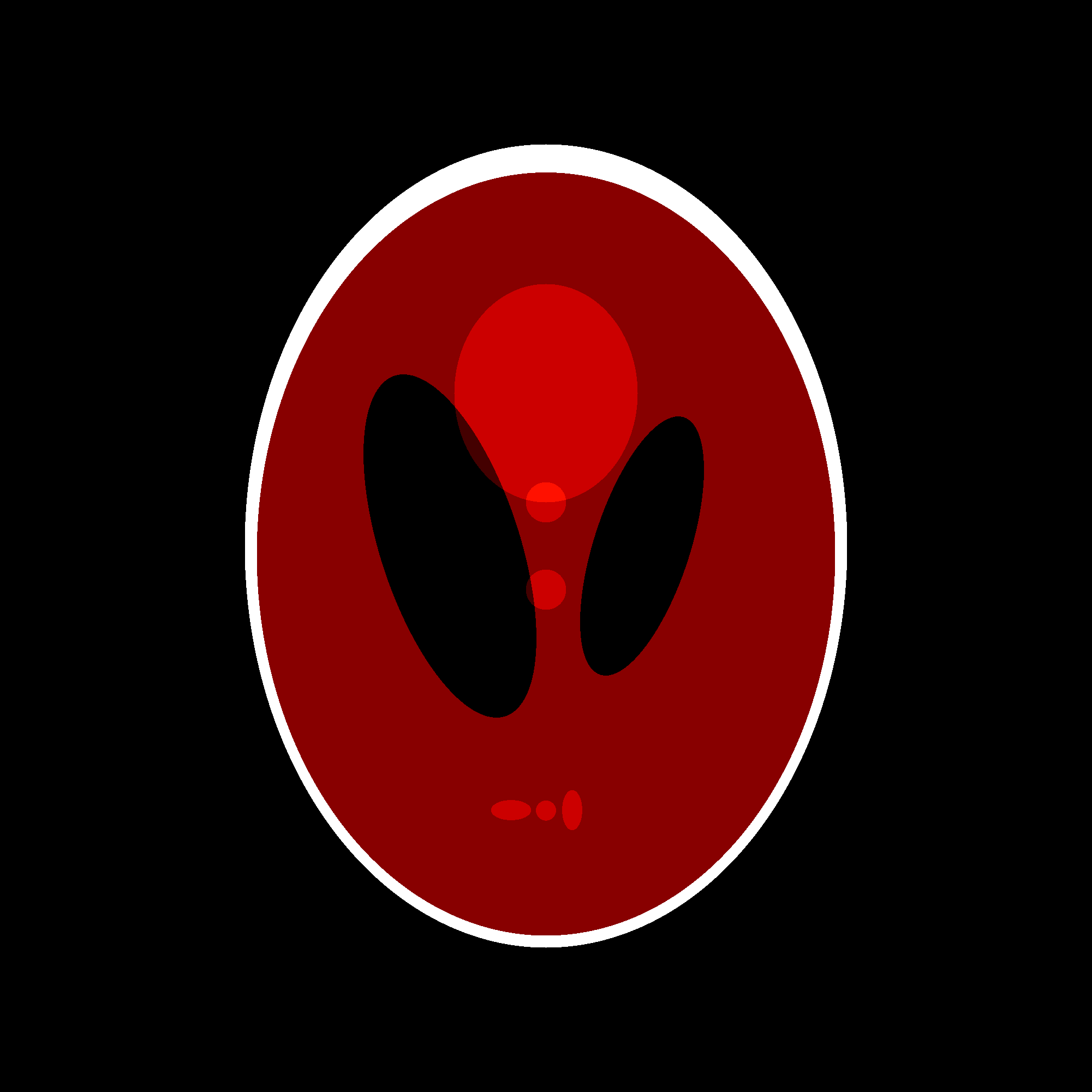}}
\hspace{0.02\textwidth}
\subfloat[][Data $f$ \label{subfig:CTdata}]{
\hspace{0.1\textwidth}
\includegraphics[height = 0.45 \textwidth]{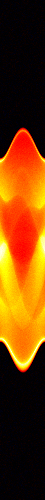}
\hspace{0.1\textwidth}
}
\subfloat[][Color scale]{
\hspace{0.05\textwidth}
\fbox{\includegraphics[height = 0.45\textwidth]{cool2hot.png}}
\hspace{0.05\textwidth}
}
\caption{A 2D limited angle computerized tomography imaging scenario. \label{fig:CTscenario}}
\end{figure}

\begin{figure}[p]
\centering
\subfloat{\vbox to 0.3 \textwidth {\vfil \hbox to 0.2 \textwidth{$n = 64 \times 64$} \vfil }}
\hspace{0.02\textwidth}
\subfloat{\includegraphics[height = 0.3 \textwidth]{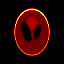}}
\hspace{0.02\textwidth}
\subfloat{\includegraphics[height = 0.3 \textwidth]{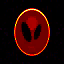}}\\
\subfloat{\vbox to 0.3 \textwidth {\vfil \hbox to 0.2 \textwidth{$n = 128 \times 128$} \vfil }}
\hspace{0.02\textwidth}
\subfloat{\includegraphics[height = 0.3 \textwidth]{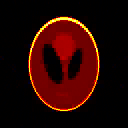}}
\hspace{0.02\textwidth}
\subfloat{\includegraphics[height = 0.3 \textwidth]{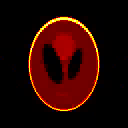}}\\
\subfloat{\vbox to 0.3 \textwidth {\vfil \hbox to 0.2 \textwidth{$n = 256 \times 256$} \vfil }}
\hspace{0.02\textwidth}
\subfloat{\includegraphics[height = 0.3 \textwidth]{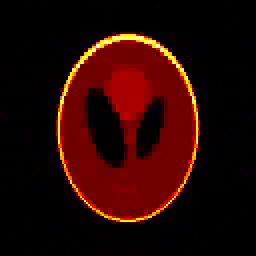}}
\hspace{0.02\textwidth}
\subfloat{\includegraphics[height = 0.3 \textwidth]{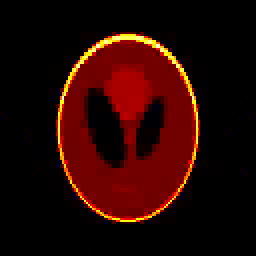}}\\
\subfloat{\vbox to 0.3 \textwidth {\vfil \hbox to 0.2 \textwidth{$n = 512 \times 512$} \vfil }}
\hspace{0.02\textwidth}
\subfloat{\includegraphics[height = 0.3 \textwidth]{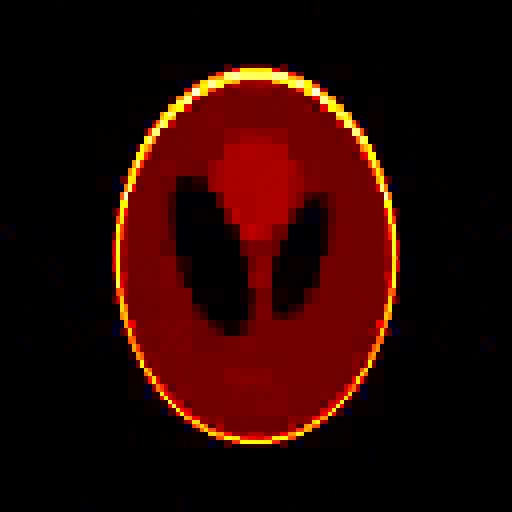}}
\hspace{0.02\textwidth}
\subfloat{\includegraphics[height = 0.3 \textwidth]{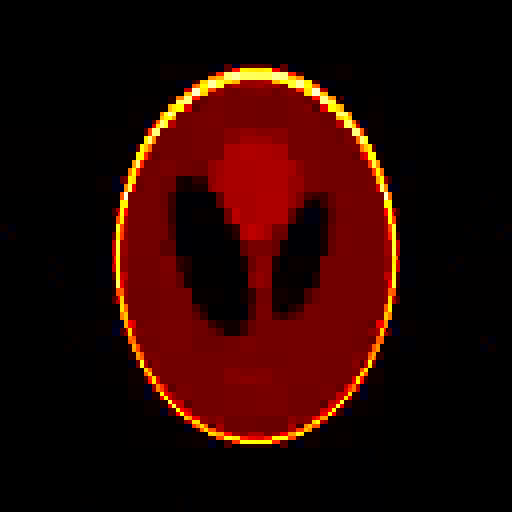}}\\
\subfloat{\vbox to 0.3 \textwidth {\vfil \hbox to 0.2 \textwidth{$n = 1024 \times 1024$} \vfil }}
\hspace{0.02\textwidth}
\subfloat{\includegraphics[height = 0.3 \textwidth]{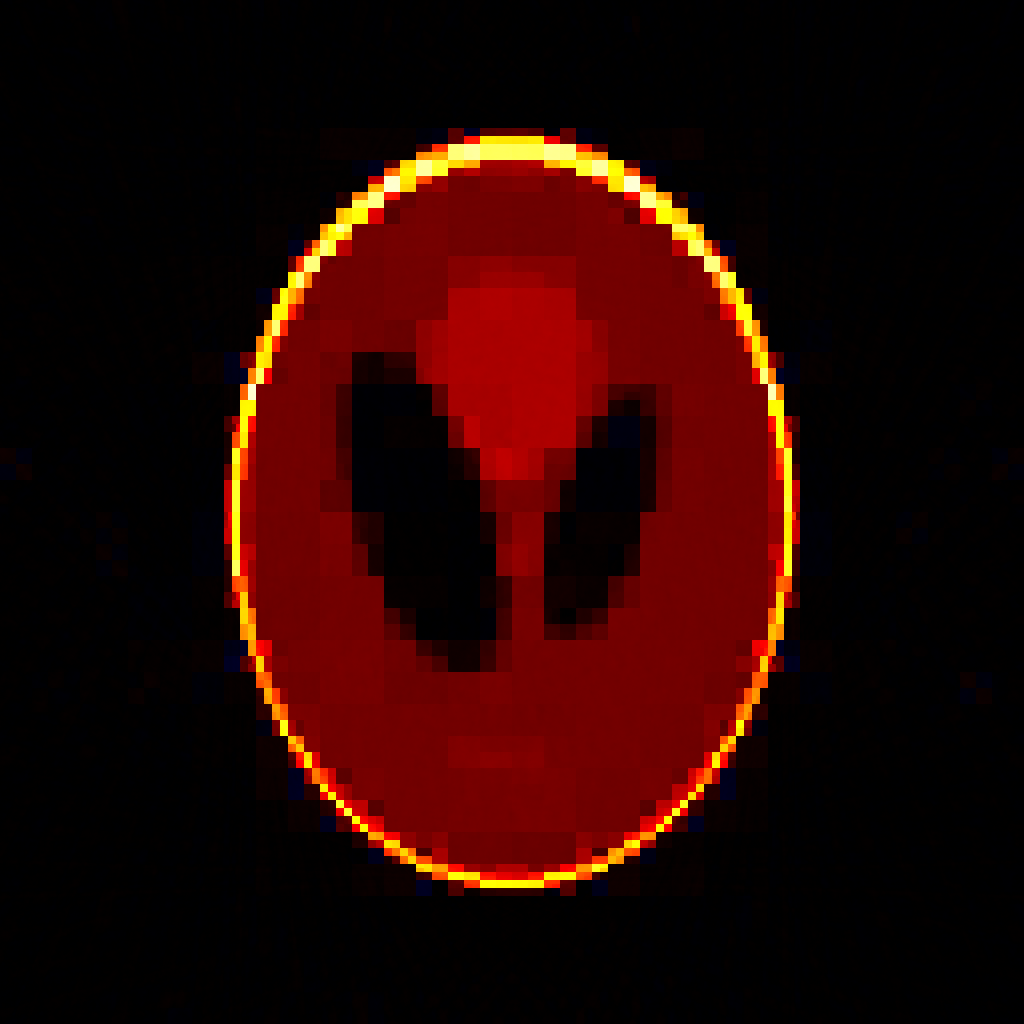}}
\hspace{0.02\textwidth}
\subfloat{\includegraphics[height = 0.3 \textwidth]{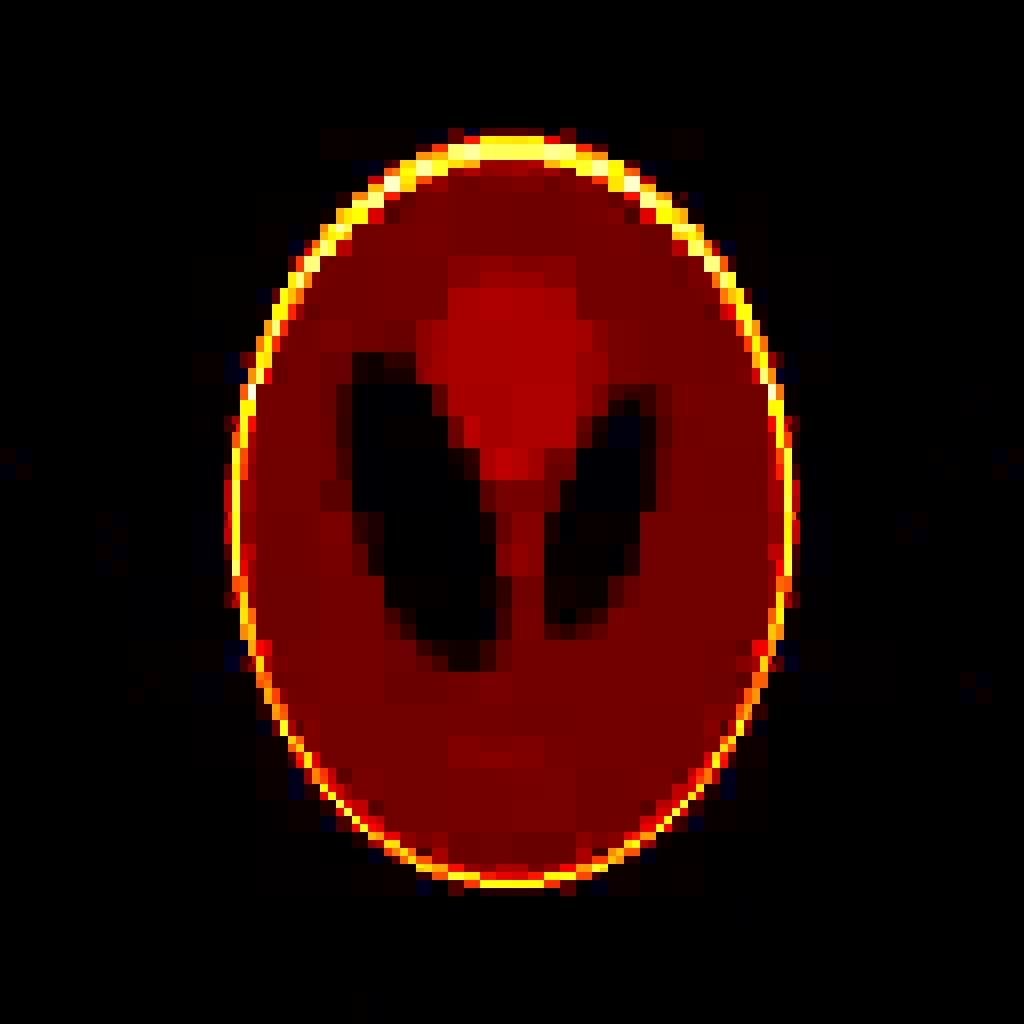}}
\caption{CM (left image column) and MAP (right image column) estimates for increasing resolution of the reconstruction grid.\label{fig:CTcomp}}
\end{figure}

\subsubsection{Summary of Observations and Discussions}
 \begin{itemize}
  \item For Gaussian priors, MAP and CM estimates coincide.
  \item For the first two $\ell_1$-type priors we examined, the MAP estimates were more convincing. For the Besov prior, MAP and CM estimate visually coincide. These findings are similar to a large variety of computational experiments. Loosely speaking, a good CM estimate resembles the MAP estimate.
  \item In \cite{LuPuBuWo12}, we used hierarchical Bayesian modeling (cf. Section~\ref{subsec:SpIP}) for EEG source imaging. While the multimodality of the posterior complicates inference for such priors, suitably computed MAP estimates, again, outperformed the CM estimates. 
  \item Recently, \cite{Gr11,GrPi13,LoLi13} revealed that every CM estimate for a prior $p(u)$ is also a MAP estimate for a different prior $\tilde{p}(u)$. Their intention was to warn against the common "reverse reading" of designing a particular $\J(u)$ for recovering certain classes of real solutions with \eref{eq:GenTikh} and then claiming to perform Bayesian MAP estimation with the prior $p(u) = \exp(-\lambda \J(u))$. In \cite{GrCeDa12}, it was shown that this MAP estimate is usually not very well suited to recover solutions $u$ that are really distributed like  $\exp(-\lambda \J(u))$ (cf. Figure \ref{fig:RndDraws}, none of the true solutions used in the computational scenarios fitted to the assumed priors!). For the discussion in this article, these results mean that a general discrimination of MAP estimates based on the Bayes cost formalism only makes sense if one strongly believes that the chosen prior most accurately models the distribution of the real solution. Otherwise, one ends up in the contradiction that the appraised CM estimate will simultaneously be a discredited MAP estimate (just for another prior). 
 \end{itemize}
The next sections will present new theoretical ideas that resolve the contradictions between these observations and the classical view on the comparison between MAP and CM estimates.
 
\section{A Novel Characterization of the MAP Estimate} \label{sec:RehabMAP}
This section presents a novel Bayes cost approach to MAP estimates for log-concave priors of the form $\exp(-\lambda \J(u))$. Throughout this section we assume that $\J:\R^n \rightarrow \R\cup \{\infty\}$ is a Lipschitz-continuous convex functional, such that for $\lambda > 0$ the function $u \mapsto \Vert Ku \Vert^2 + \lambda \J(u)$ has at least linear growth at infinity. Due to Rademacher's theorem (cf. \cite{EvGa91}) this implies that $p_{po}(u|f)$ is log-concave and differentiable almost everywhere in $\R^n$. The main ingredient will be the (generalized) \emph{Bregman distance}:
\begin{defi}
For a convex functional $\J:\R^n \longrightarrow \R\cup \{\infty\}$, the Bregman distance $D_\J^q(u,v)$ between $u, v \in \R^n$ for a subgradient $q \in \partial \J(v)$ is defined as 
\begin{equation}
	D_\J^q(u,v) = \J(u) - \J(v) - \langle q, u - v\rangle, \qquad q \in \partial \J(v) \label{eq:BregDef}
\end{equation}
\end{defi}
Note that if $\J(u)$ is Fr\'echet-differentiable in $v$, $q$ is the Fr\'echet derivative $\J'(v)$. We will therefore simplify the notation to $D_\J(u,v)$, and use $D_\J^q(u,v)$ only if we want to stress the potential ambiguity. Table \ref{tbl:BregDist} lists the Bregman distances induced by some Gibbs energies $\J(u)$. Figure \ref{fig:Breg} gives an illustration: Basically, $D_\J(u,v)$ measures the difference between $\J$ and its linearization in $u$ at another point $v$. Further, $D_\J(u,v) \geqslant 0$ and for strictly convex $\J(u)$, $D_\J(u,v) = 0$ implies $u=v$. However, the Bregman distance is not a distance in the usual mathematical sense, i.e., a metric, as it is, in general, neither symmetric nor satisfies the triangle inequality. We will further use that $D_\J(u,v)$ is convex in $u$. Bregman distances have become an important tool in variational regularization, e.g., to derive error estimates and convergence rates \cite{BuOs04,Be11}, to enhance inverse methods by Bregman iterations \cite{BuReHe07,Mo12} or to develop optimization schemes like the Split-Bregman algorithm \cite{GoOs09} used in this paper. 
\begin{figure}[bt]
   \centering
\subfloat[][$\J(x) = x^2$ \label{subfig:BregDistL2}]{\includegraphics[height = 0.4 \textwidth]{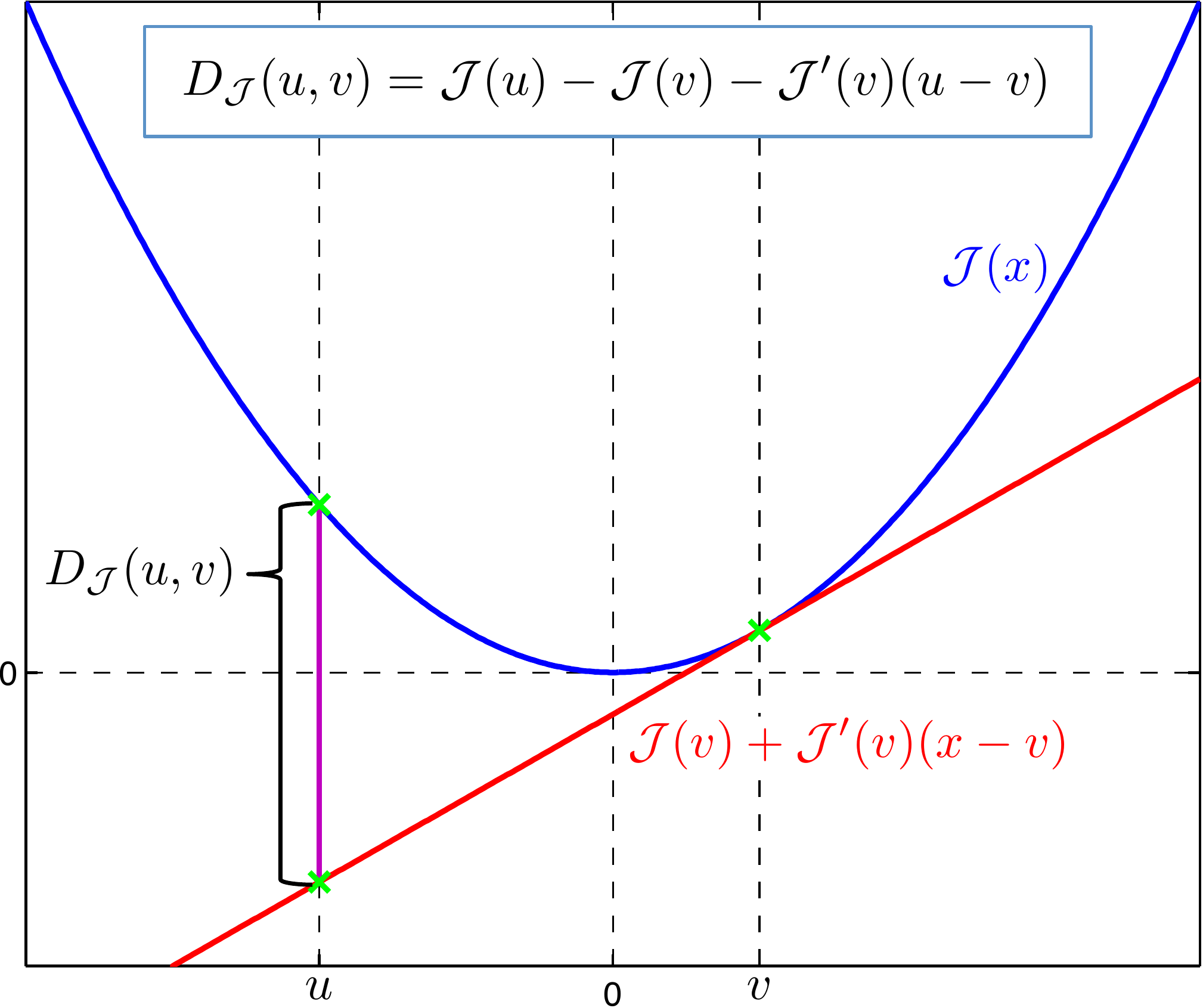}}
\hspace{0.01\textwidth}
\subfloat[][$\J(x) = |x|$ \label{subfig:BregDistL1}]{\includegraphics[height = 0.4 \textwidth]{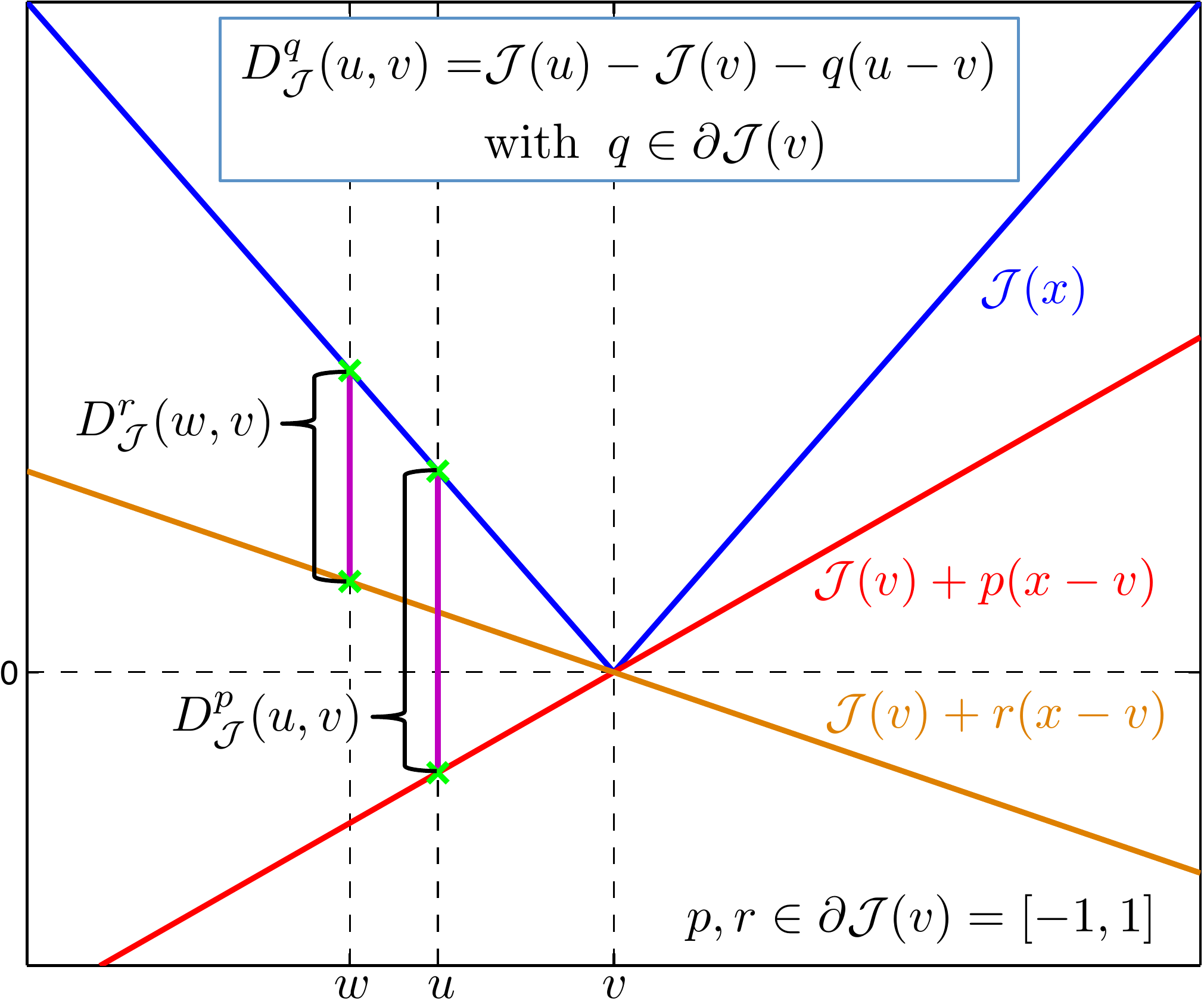}}
\caption{Illustrative explanation of the Bregman distance.\label{fig:Breg}}
\end{figure}
\begin{table}
\caption{\label{tbl:BregDist} Bregman distances induced by some Gibbs energies $\J(u)$ commonly used for prior modeling. Note that if $\J$ is separable, so is $D_\J^q(u,v)$. In these cases, the scalar expression are listed, only.} 
\begin{indented}
\lineup
\item[]\begin{tabular}{lll}
\br                              
 $\J(u)$&  dom$(\J)$&$D_\J(u,v)$ \cr 
\mr
$\frac{1}{2} \sqnorm{L u} $& $\R^n$ & $\frac{1}{2}  \sqnorm{L (u-v)}$  \cr
$|u|^p$,  $(1 < p < \infty)$  &  $\R$ &  $|u|^p - p \, u \, \sign(v) |v|^{p-1} + (p-1) |v|^p$ \cr
$|u| $&  $\R$ & $ \left( \sign(u) - \sign(v) \right) u $ \cr
$ u \log u-u$ &  $\R_{\geqslant 0}$& $ u \log \frac{u}{v}+v-u$ (Kullback-Leibler divergence)\cr
\br
\end{tabular}
\end{indented}
\end{table}

\subsection{New Bayes Cost Functions} \label{subsec:NewCost}
The classical discrimination of the MAP estimate as only being asymptotically a Bayes estimator for the uniform cost \eref{eq:UniCost} (cf. Section~\ref{subsec:BayesCost}) has a crucial flaw: It does not mean that the MAP estimate cannot be a proper Bayes estimator for a different cost function. This suggests that one should search for alternative costs better suited to the asymptotic Banach space structure such as Bregman distance costs:

\begin{defi} \label{defi:CostFun}
Let $L \in \R^{n \times n}$ be regular and $\beta > 0$. Define
\begin{eqnarray}
\PsiLS   &:= \linorm{ K (\hat{u} - u)} + \beta \sqnorm{L (\hat{u} - u)}\\
\PsiBrg &:= \linorm{ K (\hat{u} - u)} + 2 \lambda D_\J(\hat{u},u)
\end{eqnarray}
\end{defi}
Both $\PsiLS$ and $\PsiBrg$ are proper, convex (with respect to $\hat{u}$) cost functions. In the following, we will need the decay property 
\begin{equation}
\lim_{R \rightarrow \infty} \int_{\partial \mathcal{B}_R(0)} p_{po}(u|f) \: \rmd u = 0 \label{eq:DecProp}
\end{equation}
which is fulfilled under the linear growth assumption above, which yields for constants $A,B$ independent of $R$
$$ p_{po}(u|f) \leq Ae^{-\frac{B}R} \qquad \mbox{on }~ \mathcal{B}_R(0).$$
%

\begin{theo} \label{theo:Estimators}
Let $\J$ be as above and let $\lambda >0$ and $\beta \geqslant 0$.
Then the CM estimate is a Bayes estimator for $\PsiLS$ and the MAP estimate is a Bayes estimator for $\PsiBrg$. 
\end{theo}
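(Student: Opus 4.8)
The plan is to exploit the reduction in \eref{eq:BayesEstimator}: for each fixed $f$, a Bayes estimator for a cost $\Psi$ minimizes the posterior-averaged cost $g_\Psi(\hat u) := \int \Psi(u,\hat u)\, p_{po}(u|f)\,\rmd u$. Both $\PsiLSnoarg$ and $\PsiBrgnoarg$ are convex in $\hat u$ (the quadratic terms trivially, and $D_\J(\hat u,u)$ by the convexity of the Bregman distance in its first argument noted after \eref{eq:BregDef}), so $g_\Psi$ is convex and it suffices to exhibit an $\hat u$ with $0 \in \partial g_\Psi(\hat u)$. For the least-squares cost this is the classical computation: expanding $g_{\PsiLSnoarg}$ and differentiating under the integral gives the stationarity condition $\int (K^T\NoiSig K + \beta L^T L)(\hat u - u)\, p_{po}(u|f)\,\rmd u = 0$, and since $p_{po}(\cdot|f)$ integrates to one this reads $(K^T\NoiSig K + \beta L^T L)(\hat u - \uCM)=0$. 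As $\beta>0$ together with regular $L$ makes $K^T\NoiSig K + \beta L^T L$ positive definite (for $\beta=0$ one instead needs $K$ injective), we conclude $\hat u = \uCM$.

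For the Bregman cost I would expand $D_\J(\hat u,u)$ via \eref{eq:BregDef} and collect the terms depending on $\hat u$, obtaining $g_{\PsiBrgnoarg}(\hat u) = \int \linorm{K(\hat u - u)}\, p_{po}\,\rmd u + 2\lambda\J(\hat u) - 2\lambda\langle \bar q, \hat u\rangle + c$, where $\bar q := \int \nabla\J(u)\, p_{po}(u|f)\,\rmd u$ is the posterior mean subgradient (well-defined almost everywhere by Rademacher's theorem, as recalled in the standing assumptions) and $c$ is independent of $\hat u$. Its subdifferential yields the optimality condition $0 \in K^T\NoiSig K(\hat u - \uCM) + \lambda\,\partial\J(\hat u) - \lambda\bar q$.

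The crucial step is to identify $\lambda\bar q$ with a data-fidelity residual. This follows by integrating the pointwise identity $\nabla_u p_{po}(u|f) = p_{po}(u|f)\bigl(K^T\NoiSig(f - Ku) - \lambda\nabla\J(u)\bigr)$ over $\R^n$: by the divergence theorem the left-hand side equals a surface flux over $\partial\mathcal{B}_R(0)$ that vanishes as $R\to\infty$ thanks to the decay property \eref{eq:DecProp}, so $\int \nabla_u p_{po}\,\rmd u = 0$ and hence $\lambda\bar q = K^T\NoiSig(f - K\uCM)$. Substituting this into the optimality condition cancels the $\uCM$ terms and collapses it exactly to $0 \in K^T\NoiSig(K\hat u - f) + \lambda\,\partial\J(\hat u)$, which is precisely the first-order condition characterizing $\uMAP$ in \eref{eq:GenTikh}. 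Convexity of $g_{\PsiBrgnoarg}$ then certifies $\hat u = \uMAP$ as its global minimizer, completing the proof.

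I expect the main obstacle to be the rigorous justification of this integration-by-parts identity. Since $\J$ is merely Lipschitz, $\nabla\J$ exists only almost everywhere and must be treated as an integrable selection of $\partial\J$; one has to verify that $\nabla_u p_{po}$ is integrable and, above all, that the boundary flux genuinely vanishes, for which the exponential decay bound underlying \eref{eq:DecProp} is the essential ingredient. A secondary technical point is legitimizing the interchange of (sub)differentiation and integration when computing $\partial g_{\PsiBrgnoarg}$, which is again controlled by the linear-growth and decay assumptions on the posterior.
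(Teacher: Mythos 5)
Your proof is correct, but it organizes the argument differently from the paper's, so a comparison is worthwhile. Both arguments ultimately rest on the same analytic lemma --- $\int \nabla_u p_{po}(u|f)\,\rmd u = 0$, obtained from Gauss' theorem together with the decay property \eref{eq:DecProp} --- but they deploy it differently. The paper never forms a stationarity condition for the posterior-averaged cost: it centers the cost at $\uMAP$ via the three-point identity $D_\J(\hat{u},u) = D_\J(\hat{u},\uMAP) + D_\J(\uMAP,u) + \langle \pMAP - \J'(u), \hat{u}-\uMAP\rangle$, inserts the MAP optimality condition $\pMAP = -\frac{1}{\lambda}K^*\NoiSig(K\uMAP - f)$ to recognize all cross terms as $2\langle \nabla_u \log p_{po}(u|f), \hat{u}-\uMAP\rangle$, and then uses the lemma to make their posterior expectation vanish; this exhibits the averaged cost as $\linorm{K(\hat{u}-\uMAP)} + 2\lambda D_\J(\hat{u},\uMAP) + \mathrm{const.}$, so $\uMAP$ is manifestly a minimizer. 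You instead derive the first-order condition $0 \in K^*\NoiSig K(\hat{u}-\uCM) + \lambda\partial\J(\hat{u}) - \lambda\bar{q}$ and collapse it to the MAP condition after identifying $\lambda\bar{q} = K^*\NoiSig(f - K\uCM)$; note that your $\bar{q}$ is the paper's $\pCM$, and this identification is exactly the ``average optimality condition'' \eref{eq:OptCM}, which the paper proves as a separate result by the very same divergence-theorem computation you describe. What each route buys: your stationarity matching shows that the minimizer set of the averaged Bregman cost coincides with the solution set of the MAP first-order condition --- so Bayes estimators for $\PsiBrgnoarg$ and MAP estimates form the same set --- and it requires no foreknowledge of the correct centering point; the paper's decomposition yields the explicit excess-cost formula above, which ties in with the MAP-centered form of the posterior in Section~\ref{subsec:Center}. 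Two minor remarks: your injectivity caveat for $\beta = 0$ concerns only uniqueness, which the theorem does not assert --- $\hat{u}=\uCM$ satisfies your stationarity condition for every $\beta \geqslant 0$ and is therefore a Bayes estimator regardless; and the technical worries you flag at the end are indeed benign, since Lipschitz continuity of $\J$ makes $\Vert\J'(u)\Vert$ uniformly bounded (so $\bar{q}$ is finite), and the quadratic term can be expanded and integrated before differentiating, so no genuine interchange of (sub)differentiation and integration is needed.
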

\begin{proof}
We start from \eref{eq:BayesEstimator} and insert the definition of $\PsiLS$:
\begin{equation*}
\fl \hat{u}_{\PsiLSnoarg}(f) = \argminsub{\hat{u}} \left\lbrace \int \left( \linorm{ K (\hat{u} - u)} + \beta \sqnorm{L (\hat{u} - u)} \right) \: p_{po}(u|f) \: \rmd u \right\rbrace
\end{equation*}
We can rewrite the above by inserting $\uCM$ and expanding squares
\begin{eqnarray*}
\fl \hat{u}_{\PsiLSnoarg}(f) = \argminsub{\hat{u}} \bigg\lbrace  \int \left( \linorm{ K (\hat{u} - \uCM)} + \beta \sqnorm{L (\hat{u} - \uCM)} \right) \: p_{po}(u|f) \: \rmd u  \\
\fl \qquad +  \int \left(  \linorm{ K (u - \uCM)}  + \beta \sqnorm{L (u - \uCM)} \right) \: p_{po}(u|f) \: \rmd u \\
\fl \qquad - 2 \int \left( \langle K (\hat{u} - \uCM), K(u-\uCM)\rangle_{\NoiSig} + \beta \langle L (\hat{u} - \uCM), L(u-\uCM)\rangle_{2}  \right) \: p_{po}(u|f) \: \rmd u \bigg\rbrace 
\end{eqnarray*}
Due to the linearity and the definition of the CM estimate \eref{eq:CMDef}  the last integral vanishes and hence, $\hat u = \uCM$ is obviously a minimizer.
For the MAP estimate, we again start from \eref{eq:BayesEstimator} 
and insert the definition of $\PsiBrg$:
\begin{equation*}
\fl \hat{u}_{\PsiBrgnoarg}(f) = \argminsub{\hat{u}} \left\lbrace \int_{\R^ n} \left( \linorm{ K (\hat{u} - u)} + 2 \lambda D_\J(\hat{u},u) \right) \: p_{po}(u|f) \: \rmd u \right\rbrace
\end{equation*}
Now, we can exclude the null-set where $\J(u)$ is not Fr\'echet-differentiable,
$$\mathcal{S} := \{u \in \R^n | |\partial \J(u)| \neq 1 \}, $$
from the integration and insert the definition of $D_\J(\hat{u},u)$ on $\mathcal{S}^c$:  
\begin{equation*}
\fl \hat{u}_{\PsiBrgnoarg}(f) = \argminsub{\hat{u}} \left\lbrace \int_{ \mathcal{S}^c} \left( \linorm{ K (\hat{u} - u)} + 2 \lambda \left( \J(\hat{u})-\J(u) - \langle \J'(u), \hat{u}-u\rangle \right) \right) p_{po}(u|f) \rmd u \right\rbrace
\end{equation*}
The squared norm can be developed as in the case of the CM-estimate, while for the Bregman distance we use the following elementary identity:
$$ D_\J(\hat{u},u) = D_\J(\hat{u},\uMAP) + D_\J(\uMAP,u) + \langle \pMAP - \J'(u), \hat{u}-\uMAP \rangle  $$
Thus, on $\mathcal{S}^c$ we have
\begin{eqnarray*}
\fl \linorm{K (\hat{u} - u)} + 2 \lambda D_\J(\hat{u},u)  \\
\fl \qquad = \linorm{K(\hat{u} - \uMAP)}+ 2\lambda D_\J(\hat{u},\uMAP) + \linorm{K(\uMAP-u)}+  2\lambda D_\J(\uMAP,u)  \\
\fl \qquad \quad + 2 \langle K (\hat{u} - \uMAP), K(\uMAP-u)\rangle_{\NoiSig} + 2 \lambda \langle \pMAP - \J'(u), \hat{u}-\uMAP \rangle. 
\end{eqnarray*}
The first two terms in the second line are obviously minimal for $\hat{u}=\uMAP$, while the other terms in this line are independent of $\hat{u}$. 
In the last line we can insert the subgradient from the optimality condition for $\uMAP$, 
$$\pMAP = - \frac{1}\lambda K^* \NoiSig (K \uMAP -f)   \in \partial \J(\uMAP),$$
 and rewrite
\begin{eqnarray*}
\fl 2 \langle K (\hat{u} - \uMAP), K(\uMAP-u)\rangle_{\NoiSig} + 2 \lambda \langle \pMAP - \J'(u), \hat{u}-\uMAP \rangle \\ 
  \qquad \quad =  - 2 \langle K (\hat{u} - \uMAP), Ku - f\rangle_{\NoiSig} - 2 \lambda \langle - \J'(u), \hat{u}-\uMAP \rangle \\ 
  \qquad \quad  = - 2 \langle K^* \NoiSig (Ku -f) + \lambda \J'(u), \hat{u}-\uMAP \rangle \\ 
  \qquad \quad = 2 \langle \nabla_u \log p_{po}(u|f) , \hat{u} - \uMAP \rangle.
 \end{eqnarray*}
Using the logarithmic derivative $\nabla_u p_{po}(u|f) = (\nabla_u \log p_{po}(u|f)) p_{po}(u|f)$, the posterior expectation of the latter equals
\begin{eqnarray*}
\fl 2 \int_{ \mathcal{S}^c} \langle \nabla_u \log p_{po}(u|f) , \hat{u} - \uMAP \rangle \: p_{po}(u|f) \: \rmd u &=& 
2 \langle \int_{ \mathcal{S}^c}  \nabla_u p_{po}(u|f) \: \rmd u, \hat{u} - \uMAP \rangle.
 \end{eqnarray*}
With Gauss' theorem and \eref{eq:DecProp} we finally obtain:
\begin{eqnarray*}
\left\Vert \int \nabla_u p_{po}(u|f) \: \rmd u \right\Vert \quad \: &= \quad &\lim_{R \rightarrow \infty} \left\Vert \int_{\mathcal{B}_R(0)} \nabla_u p_{po}(u|f) \: \rmd u \right\Vert \nonumber \\
&= &\lim_{R \rightarrow \infty} \left\Vert \int_{\partial \mathcal{B}_R(0)} p_{po}(u|f) \frac{u}{R} \: \rmd u \right\Vert \nonumber \\
&\leqslant &\lim_{R \rightarrow \infty}\int_{\partial \mathcal{B}_R(0)} p_{po}(u|f) \: \rmd u  \nonumber \\
 &= \quad & \quad 0 
\end{eqnarray*}
\end{proof}

First, we apply Theorem \ref{theo:Estimators} to the fundamental case of Gaussian priors. We can parameterize any (centered) Gaussian energy as $\J(u) = \beta/(2\lambda) \sqnorm{ L  u }$. For this choice $2\lambda D_\J(\hat{u},u)$ = $\beta \sqnorm{L (\hat{u} - u)}$, and $\Psi_{\text{\tiny LS}}(u,\hat{u})  = \Psi_{\text{\tiny Brg}}(u,\hat{u})$: The equality of MAP and CM estimate in the Gaussian case is no longer a strange coincidence but follows naturally from the properties of the Bregman distance. \\
In the non-Gaussian case, the domain of $\J$ usually defines a Banach space or a subset thereof in the limit $n\rightarrow \infty$. E.g., the discrete total variation prior will define the space of functions of bounded variation in the limit. In such a space there is no natural Hilbert space norm that one should obtain as the limit of $\Vert L u \Vert^2$. Even worse, it is questionable whether any Hilbert space norm is a meaningful measure for functions of bounded variation. The only reasonable choice might be $L = 0$, which means that $\PsiLS$ measures purely in the output space, which will be a Hilbert space. However, for ill-posed inverse problems with noisy data it is well-established that one should not just minimize a criterion related to the output $Ku$.

\subsection{A MAP-Centered Form of the Posterior} \label{subsec:Center}

As pointed out in Section~\ref{subsec:BayesCost}, one classical geometrical argument was that the CM estimate is in the center of mass of $p_{po}(u|f)$ while the MAP estimate does not allow for such an interpretation (cf. Section~\ref{subsec:BayesCost}). Using Bregman distances, we can rewrite the $p_{po}(u|f)$ in a MAP-centered form, which also disqualifies this argument. We use the optimality condition of the MAP-estimate \eref{eq:GenTikh},
 \begin{equation}
   K^* \NoiSig (K \uMAP - f) + \lambda \pMAP = 0, \qquad \pMAP \in \partial \J(\uMAP), \label{eq:OptMAP}
 \end{equation}
to rewrite $K^* \NoiSig f$ in the posterior energy:
\begin{eqnarray}
\fl  \frac{1}{2} \linorm{K \, u - f} + \lambda \J (u) \nonumber \\
= \frac{1}{2} \linorm{K \, u} - \langle K^* \NoiSig f, u \rangle + \lambda \J (u) + \frac{1}{2 } \linorm{f} \nonumber \\
 = \frac{1}{2} \linorm{K \, u} - \langle K^* \NoiSig K \uMAP + \lambda \pMAP , u \rangle + \lambda \J (u) + \frac{1}{2 } \linorm{f} \nonumber \\
 = \frac{1}{2} \linorm{K \, u} - \langle \NoiSig K \uMAP  , K u \rangle + \frac{1}{2 } \linorm{K \uMAP} \nonumber \\
\quad + \lambda \left( \J (u) - \J(\uMAP) - \langle \pMAP,u -\uMAP \rangle \right) \nonumber \\
\quad - \frac{1}{2} \linorm{K \uMAP} + \lambda \left(\J(\uMAP) - \langle \pMAP, \uMAP \rangle \right) + \frac{1}{2 } \linorm{f} \nonumber \\
 = \frac{1}{2} \linorm{K \, (u-\uMAP)}  + \lambda D^{\pMAP}_\J(u,\uMAP) + \text{const.}, \label{eq:PostMAPEnergy}
\end{eqnarray}
where $\text{const.}$ sums all terms not depending on $u$. Hence, we can write the posterior as
\begin{equation}
p_{po}(u|f) \propto \exp \left( - \frac{1}{2} \linorm{K (u- \uMAP)}  - \lambda D_\mathcal{J}^{\pMAP}(u,\uMAP) \right).
\end{equation}
 Now, the posterior energy is sum of two convex functionals both minimized by $\uMAP$, i.e., $\uMAP$ is the center of $p_{po}(u|f)$ with respect to the distance induced by \eref{eq:PostMAPEnergy}.

\subsection{Average Optimality of the CM Estimate}
To further compare MAP and CM estimates, we derive an ``average optimality condition'' for the CM estimate. Let
\begin{equation}
\pCM := \Exp\left[ \J'(u) \right] = \int \J'(u) p_{po}(u|f) \rmd u
\end{equation}
be the CM estimate for the (sub)gradient of $\J(u)$. We have:
\begin{eqnarray}
\fl \qquad   K^* \NoiSig (K \uCM -f) + \lambda \pCM  &=& K^* (K \NoiSig \Exp[u] -f) + \lambda \Exp[\mathcal{J}'(u)] \nonumber \\
&=& \Exp \left[ K^* \NoiSig (K u -f) + \lambda \mathcal{J}'(u) \right] \nonumber \\
&=&  \int_{\mathcal{S}^c} K^* \NoiSig (K u -f) + \lambda \mathcal{J}'(u) \: p_{po}(u|f) \: \rmd u \nonumber \\
&=&  \int_{\mathcal{S}^c}  \nabla_u p_{po}(u|f) \: \rmd u = 0, \label{eq:OptCM}
\end{eqnarray}
where the integral term, again, vanishes. Comparing \eref{eq:OptCM} to \eref{eq:OptMAP} we see that the CM estimate fulfills an optimality condition "on average", i.e., with respect to the average gradient $\pCM =  \Exp[\J'(u)]$ but not with respect to the gradient $\J'(\uCM) = \J'(\Exp[u])$. The difference between MAP and CM estimate here manifests in $\J'(\Exp[u]) \neq \Exp[\J'(u)]$, which, again, vanishes for the Gaussian case where $\J'(u)$ is linear.

\subsection{New Inequalities } \label{subsec:InEq}
 Finally, we show that when measured in the Bregman distance $D_J(\hat{u},u)$, which is a more reasonable error measure than norms in the case of a non-quadratic $\J(u)$, the MAP estimate performs better than the CM estimate. In return, the CM estimate out-performs the MAP estimate when the error is measured in a quadratic distance:
 \begin{theo} \label{theo:InEq}
 Let $L \in \R^{n \times n}$ be regular, then we have
  \begin{eqnarray}
  \Exp \left[ \sqnorm{L (\uCM - u )} \right]  &\leqslant \Exp \left[ \sqnorm{ L (\uMAP - u ) } \right]  \\
  \Exp  \left[ D_\mathcal{J}(\uMAP,u) \right]   &\leqslant \Exp  \left[ D_\J(\uCM,u) \right]
  \end{eqnarray}
 \end{theo}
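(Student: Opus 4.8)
The plan is to treat the two inequalities separately, since they have quite different flavours. The first is essentially the classical variance decomposition that underlies the optimality of the CM estimate for a quadratic cost, whereas the second genuinely exploits the Bregman geometry together with the two optimality conditions \eref{eq:OptMAP} and \eref{eq:OptCM}. Throughout, all expectations are posterior expectations with respect to $p_{po}(u|f)$ at a fixed $f$, and I would first record that the linear growth and decay assumptions guarantee finiteness of the moments $\Exp[\J(u)]$, $\Exp[\J'(u)] = \pCM$ and $\Exp[\langle \J'(u), u\rangle]$, so that all manipulations below are legitimate.

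For the first inequality I would expand the weighted square around $\uCM = \Exp[u]$. Writing $L(\hat{u} - u) = L(\hat{u} - \uCM) + L(\uCM - u)$ and taking the posterior expectation, the cross term is proportional to $\langle L(\hat{u} - \uCM), L\,\Exp[\uCM - u]\rangle$, which vanishes by the very definition of $\uCM$. Hence
\begin{equation*}
\Exp\left[\sqnorm{L(\hat{u} - u)}\right] = \sqnorm{L(\hat{u} - \uCM)} + \Exp\left[\sqnorm{L(\uCM - u)}\right]
\end{equation*}
for every $\hat{u}$. Choosing $\hat{u} = \uMAP$ and discarding the nonnegative first term on the right gives the first claim immediately; this is exactly the statement that $\uCM$ is the Bayes estimator for the quadratic part of $\PsiLS$.

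The second inequality is the heart of the matter. I would first compute the posterior expectation of the Bregman cost as a function of its first argument. Inserting $D_\J(\hat{u},u) = \J(\hat{u}) - \J(u) - \langle \J'(u), \hat{u} - u\rangle$ and using linearity of the expectation yields
\begin{equation*}
\Exp\left[D_\J(\hat{u}, u)\right] = \J(\hat{u}) - \langle \pCM, \hat{u}\rangle + c,
\end{equation*}
where $c = -\Exp[\J(u)] + \Exp[\langle \J'(u), u\rangle]$ is independent of $\hat{u}$. Consequently the desired inequality $\Exp[D_\J(\uMAP, u)] \leqslant \Exp[D_\J(\uCM, u)]$ reduces to the purely deterministic statement $\J(\uMAP) - \J(\uCM) \leqslant \langle \pCM, \uMAP - \uCM\rangle$. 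To establish this I would chain two estimates. Since $\pMAP \in \partial \J(\uMAP)$, the subgradient inequality gives $\J(\uMAP) - \J(\uCM) \leqslant \langle \pMAP, \uMAP - \uCM\rangle$. Subtracting \eref{eq:OptCM} from \eref{eq:OptMAP} gives $\lambda(\pMAP - \pCM) = -K^*\NoiSig K(\uMAP - \uCM)$, so testing against $\uMAP - \uCM$ produces
\begin{equation*}
\langle \pMAP - \pCM, \uMAP - \uCM\rangle = -\frac{1}{\lambda}\linorm{K(\uMAP - \uCM)} \leqslant 0.
\end{equation*}
Combining the two shows $\J(\uMAP) - \J(\uCM) \leqslant \langle \pMAP, \uMAP - \uCM\rangle \leqslant \langle \pCM, \uMAP - \uCM\rangle$, which is precisely the required estimate.

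I expect the main obstacle to be not any single algebraic step but the careful justification of the two ingredients feeding the deterministic reduction: that $\pCM = \Exp[\J'(u)]$ is well defined and that the CM estimate genuinely satisfies the average optimality condition \eref{eq:OptCM}. Both rest on excluding the non-differentiability null set $\mathcal{S}$ and on the vanishing of the boundary integral $\int \nabla_u p_{po}(u|f)\,\rmd u$, established in the proof of Theorem \ref{theo:Estimators} via Gauss' theorem and the decay property \eref{eq:DecProp}. Once those are in hand, the monotonicity relation $\langle \pMAP - \pCM, \uMAP - \uCM\rangle \leqslant 0$ — a manifestation of the monotonicity of the subdifferential coupled through the data term $K$ — is exactly what turns the convex-analytic comparison into the stated inequality.
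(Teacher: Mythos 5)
Your proof is correct, and for the first inequality it is essentially the paper's argument: the variance decomposition around $\uCM$ is exactly the Bayes-estimator property of the CM estimate for the quadratic cost, which the paper simply invokes from the proof of Theorem~\ref{theo:Estimators}. For the second inequality, however, you take a genuinely different route. The paper does not touch optimality conditions at this point: it plays the two Bayes-estimator properties of Theorem~\ref{theo:Estimators} against each other --- MAP optimality for $\PsiBrgnoarg$ gives one inequality, CM optimality for $\PsiLSnoarg$ with parameter $\beta$ controls the common term $\Exp\left[\linorm{K(\hat{u}-u)}\right]$ in the other direction --- and after cancellation sends $\beta \rightarrow 0$. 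You instead reduce $\Exp\left[D_\J(\hat{u},u)\right]$ to the deterministic convex function $\hat{u} \mapsto \J(\hat{u}) - \langle \pCM, \hat{u}\rangle + c$ and close the estimate with the subgradient inequality at $\uMAP$ together with the monotonicity relation obtained by subtracting \eref{eq:OptCM} from \eref{eq:OptMAP}. This makes your proof depend on the average optimality condition \eref{eq:OptCM} --- and hence on the Gauss-theorem/decay argument and on the Lipschitz bound that makes $\pCM$ well defined --- all of which the paper establishes before the theorem, so the dependence is legitimate; the paper's route needs only Theorem~\ref{theo:Estimators} and avoids these prerequisites as well as the (slightly delicate) $\beta \rightarrow 0$ limit. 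What your route buys in exchange is a sharper, quantitative statement: your two estimates actually combine to the exact identity
\begin{equation*}
\Exp\left[ D_\J(\uCM,u) \right] - \Exp\left[ D_\J(\uMAP,u) \right] \;=\; D_\J^{\pMAP}(\uCM,\uMAP) + \frac{1}{\lambda}\linorm{K(\uMAP - \uCM)},
\end{equation*}
so the gap between the two expected Bregman costs is explicitly quantified and is strictly positive whenever $\uMAP \neq \uCM$ and $K$ is injective, whereas the paper's limiting argument yields only the inequality itself.
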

\begin{proof}
 The first inequality directly follows from the fact that $\uCM$ is also the Bayes estimator for $\Psi(u,\hat{u}) = \sqnorm{L (\uCM - u )}$, which follows from the proof to Theorem~\ref{theo:Estimators}. For the second inequality, we use the minimizing properties of MAP and CM estimates:
\begin{eqnarray*}
\fl \int \left( \linorm{ K (\uMAP - u)} + 2 \lambda D_\J(\uMAP,u) \right) \: p_{po}(u|f) \: \rmd u \\
 \leqslant \int \left( \linorm{ K (\uCM - u)} + 2 \lambda D_\J(\uCM,u) \right) \: p_{po}(u|f) \: \rmd u\\
 \leqslant \int \left( \linorm{ K (\uMAP - u)} + 2 \lambda D_\J(\uCM,u) \right)\: p_{po}(u|f) \: \rmd u \\*
 \quad + \beta \int \left(  \sqnorm{L (\uMAP - u)} - \sqnorm{L (\uCM - u)} \right) \: p_{po}(u|f)  \: \rmd u\\
\end{eqnarray*}
Since $\beta > 0$ is arbitrary, we can consider $\beta \rightarrow 0$ and obtain
\begin{equation*}
\int D_\J(\uMAP,u) \: p_{po}(u|f)  \: \rmd u \leqslant \int D_\J(\uCM,u) \: p_{po}(u|f)  \: \rmd u
\end{equation*}
\end{proof}


\section{Discussion and Conclusions}\label{sec:Discussion}

In this article, we examined point estimates in Bayesian inversion from both computational and theoretical perspectives and contrasted recent observations with classical assumptions. We showed that the common discrimination of MAP estimates based on the Bayes cost formalism is not valid: Using Bregman distances, the MAP estimate is a proper Bayes estimator for a convex cost function as well (Section~\ref{subsec:NewCost}). Further aspects like the centeredness of the posterior around the MAP estimate (Section~\ref{subsec:Center}) and the optimality with respect to error measures (Section~\ref{subsec:InEq}) were examined as well.\\ 
A potential irritation might be that the cost function for the MAP estimate depends on the chosen prior while the one for the CM estimate does not. However, this is usually not a drawback but rather an advantage: $\J(u)$ is chosen such that it grasps the most distinctive features of $u$. Often, one is consequently also most interested in estimating these features correctly, which is measured by $D_\J(u,v)$ better than in some squared error metric. For instance, in a situation like the 2D image deblurring scenario in Section~\ref{subsubsec:2DImDeb}, one is mainly interested in the correct separation and location of the intensity spots while their absolute amplitudes might be of minor interest. In such situations, the standard squared error is a poor indicator of reconstruction quality (see also the discussions in \cite{Be11,BuOs04,BuReHe07,ScKaHoKa12}). On the other hand, the induced Bregman distance $D_\J(u,v)$ is $0$ if the sign pattern of $u$ and $v$ coincide (cf. Table \ref{tbl:BregDist}) and grows only linearly, otherwise. \\
The main aim of this article was to rehabilitate the MAP estimate for Bayesian inversion and, thereby, to also disprove common misconceptions about the nature of MAP estimation.
We think ''MAP or CM?'' is \emph{not} an interesting question for relating variational regularization and Bayesian inference. It might be an obvious question but it puts the focus on a direct comparison between point estimates and suggests that one should choose between one of the two approaches. The real strength of Bayesian approaches is to model and quantify uncertainty and information at all stages of the problem, \emph{beyond} point estimates. In this direction, Bayesian techniques can very well complement variational  approaches. \\
In addition, Bregman distances have proven to be an interesting tool to analyze Bayesian inversion as well, which further emphasizes the strength of this concept for inverse problems theory.\\
Going from discrete to infinite dimensional Bayesian inversion has recently attracted attention \cite{LaSaSi09,HeLa09,He10,St10,DaLaStVo13} for theoretical reasons as well as for designing algorithms that work in high dimensional settings. Extending the ideas presented here to infinite dimensions could also be useful to draw further connections to variational approaches, which are often rather formulated and analyzed in a function space setting.\\
This article investigated log-concave priors, which covers many priors used in Bayesian inversion. However, especially for implementing sparsity constraints priors that do not fit into this category are used more and more often (cf. Section~\ref{subsec:SpIP}). As their use might lead to multimodal posteriors, getting more insight into the relation of the CM estimate and the local maxima of the posterior would be very valuable. \\
Connected to the last point is the extension to non-linear inverse problems.\\ 
Finally, our presentation covered Gaussian noise, only. An extension to other relevant noise models like Poisson noise would be very interesting. 


\ack
This work has been supported by the German Science Foundation (DFG) via  grant BU 2327/6-1 within the  \textit{Inverse Problems Initiative} and via Cells in Motion Cluster of Excellence (EXC 1003 CiM), University of M\"unster. The authors thank Tapio Helin, Matti Lassas and Samuli Siltanen (all Helsinki University) for stimulating this research direction.





\section*{References}

\bibliographystyle{abbrv}
\bibliography{all}

\begin{thebibliography}{10}

\bibitem{BaCaSo10}
J.~Bardsley, D.~Calvetti, and E.~Somersalo.
\newblock {Hierarchical regularization for edge-preserving reconstruction of
  PET images}.
\newblock {\em Inverse Probl}, 26:035010 (16pp), 2010.

\bibitem{Be11}
M.~Benning.
\newblock {\em Singular Regularization of Inverse Problems}.
\newblock PhD thesis, University of Muenster, 2011.

\bibitem{BuOs04}
M.~Burger and S.~Osher.
\newblock {Convergence rates of convex variational regularization}.
\newblock {\em Inverse Probl}, 20:1411--1421, 2004.

\bibitem{BuOs13}
M.~Burger and S.~Osher.
\newblock {A Guide to the TV Zoo}.
\newblock In {\em Level Set and PDE Based Reconstruction Methods in Imaging},
  Lecture Notes in Mathematics, pages 1--70. Springer International Publishing,
  2013.

\bibitem{BuReHe07}
M.~Burger, E.~Resmerita, and L.~He.
\newblock Error estimation for bregman iterations and inverse scale space
  methods in image restoration.
\newblock {\em Computing}, 81(2-3):109--135, 2007.

\bibitem{CuFoSu11}
T.~Cui, C.~Fox, and M.~J. O'Sullivan.
\newblock {Bayesian calibration of a large-scale geothermal reservoir model by
  a new adaptive delayed acceptance Metropolis Hastings algorithm}.
\newblock {\em Water Resour Res}, 47, 2011.

\bibitem{DaLaStVo13}
M.~Dashti, K.~J.~H. Law, A.~M. Stuart, and J.~Voss.
\newblock Map estimators and their consistency in bayesian nonparametric
  inverse problems.
\newblock {\em Inverse Probl}, 29(9):095017, 2013.

\bibitem{EnHaNe96}
H.~Engl, M.~Hanke-Bourgeois, and A.~Neubauer.
\newblock {\em {Regularization of Inverse Problems}}.
\newblock {Mathematics and Its Applications}. Springer Netherland, Berlin,
  1996.

\bibitem{EvGa91}
L.~C. Evans and R.~F. Gariepy.
\newblock {\em {Measure Theory and Fine Properties of Functions}}, volume~5.
\newblock CRC press, 1991.

\bibitem{GoOs09}
T.~Goldstein and S.~Osher.
\newblock {The Split Bregman method for L1-regularized problems}.
\newblock {\em SIAM J Img Sci}, 2:323--343, April 2009.

\bibitem{Gr11}
R.~Gribonval.
\newblock {Should Penalized Least Squares Regression be Interpreted as Maximum
  A Posteriori Estimation?}
\newblock {\em IEEE Trans Signal Process}, 59(5):2405--2410, 2011.

\bibitem{GrCeDa12}
R.~Gribonval, V.~Cevher, and M.~Davies.
\newblock {Compressible Distributions for High-Dimensional Statistics}.
\newblock {\em IEEE Trans Inf Theory}, 58(8):5016--5034, 2012.

\bibitem{GrPi13}
R.~Gribonval and P.~Machart.
\newblock {Reconciling "priors" \& "priors" without prejudice?}
\newblock Research Report RR-8366, INRIA, Sept. 2013.

\bibitem{HaLaLeSaTa04}
H.~Haario, M.~Laine, M.~Lehtinen, E.~Saksman, and J.~Tamminen.
\newblock {Markov Chain Monte Carlo Methods for High Dimensional Inversion in
  Remote Sensing}.
\newblock {\em J R Stat Soc Series B Stat Methodol}, 66(3):591--607, 2004.

\bibitem{HmKaKoLaNiSi13}
K.~H\"am\"al\"ainen, A.~Kallonen, V.~Kolehmainen, M.~Lassas, K.~Niinimaki, and
  S.~Siltanen.
\newblock {Sparse Tomography}.
\newblock {\em SIAM J Sci Comput}, 35(3):B644--B665, 2013.

\bibitem{He10}
T.~Helin.
\newblock {On infinite-dimensional hierarchical probability models in
  statistical inverse problems}.
\newblock {\em Inverse Probl Imaging}, 3:567--597, 2010.

\bibitem{HeLa09}
T.~Helin and M.~Lassas.
\newblock {Hierarchical Models in Statistical Inverse Problems and the
  Mumford--Shah Functional}.
\newblock Technical Report arXiv:0908.3396, Aug 2009.

\bibitem{HeMaPhFr09}
R.~N. Henson, J.~Mattout, C.~Phillips, and K.~J. Friston.
\newblock {Selecting forward models for MEG source-reconstruction using
  model-evidence.}
\newblock {\em Neuroimage}, 46(1):168--176, May 2009.

\bibitem{KaFo11}
J.~P. Kaipio and C.~Fox.
\newblock {The Bayesian Framework for Inverse Problems in Heat Transfer}.
\newblock {\em Heat Transfer Eng.}, 32(9):718--753, 2011.

\bibitem{KaSo05}
J.~P. Kaipio and E.~Somersalo.
\newblock {\em {Statistical and Computational Inverse Problems}}, volume 160 of
  {\em {Applied Mathematical Sciences}}.
\newblock Springer New York, 2005.

\bibitem{KaSo07}
J.~P. Kaipio and E.~Somersalo.
\newblock {Statistical inverse problems: Discretization, model reduction and
  inverse crimes}.
\newblock {\em J Comput Appl Math}, 198(2):493--504, 2007.
\newblock Applied Computational Inverse Problems.

\bibitem{Ka98}
S.~Kay.
\newblock {\em Fundamentals of Statistical Signal Processing: Estimation
  Theory}.
\newblock Number Bd. 1 in Fundamentals of Statistical Signal Processing.
  Prentice-Hall PTR, 1998.

\bibitem{KoLaNiSi12}
V.~Kolehmainen, M.~Lassas, K.~Niinim{\"a}ki, and S.~Siltanen.
\newblock {Sparsity-promoting Bayesian inversion}.
\newblock {\em Inverse Probl}, 28(2):025005 (28pp), 2012.

\bibitem{LaSeKaVaKoToMa13}
T.~Lahivaara, A.~Seppanen, J.~Kaipio, J.~Vauhkonen, L.~Korhonen, T.~Tokola, and
  M.~Maltamo.
\newblock {Bayesian Approach to Tree Detection Based on Airborne Laser Scanning
  Data}.
\newblock {\em IEEE Trans Geosci Remote Sens}, PP(99):1--10, 2013.

\bibitem{LaSaSi09}
M.~Lassas, E.~Saksman, and S.~Siltanen.
\newblock {Discretization invariant Bayesian inversion and Besov space priors.}
\newblock {\em Inverse Probl Imaging}, (3):87--122, Jan 2009.

\bibitem{LaSi04}
M.~Lassas and S.~Siltanen.
\newblock {Can one use total variation prior for edge-preserving Bayesian
  inversion?}
\newblock {\em Inverse Probl}, 20:1537--1563, 2004.

\bibitem{LiKoRoKo13}
A.~Lipponen, V.~Kolehmainen, S.~Romakkaniemi, and H.~Kokkola.
\newblock Correction of approximation errors with random forests applied to
  modelling of cloud droplet formation.
\newblock {\em Geosci Model Dev}, 6(6):2087--2098, 2013.

\bibitem{LoLi13}
C.~Louchet and L.~Moisan.
\newblock {Posterior Expectation of the Total Variation model: Properties and
  Experiments}.
\newblock {\em SIAM J Imaging Sci}, 6(4):2640--2684, 2013.

\bibitem{Lu12}
F.~Lucka.
\newblock {Fast Markov chain Monte Carlo sampling for sparse Bayesian inference
  in high-dimensional inverse problems using L1-type priors}.
\newblock {\em Inverse Probl}, 28(12):125012, 2012.

\bibitem{LuPuBuWo12}
F.~Lucka, S.~Pursiainen, M.~Burger, and C.~H. Wolters.
\newblock {Hierarchical Bayesian inference for the EEG inverse problem using
  realistic FE head models: Depth localization and source separation for focal
  primary currents}.
\newblock {\em Neuroimage}, 61(4):1364--1382, 2012.

\bibitem{Mo12}
M.~Moeller.
\newblock {\em {Multiscale Methods for Generalized Sparse Recovery and
  Applications in High Dimensional Imaging}}.
\newblock PhD thesis, University of Muenster, 2012.

\bibitem{NiKoKa11}
A.~Nissinen, V.~Kolehmainen, and J.~Kaipio.
\newblock {Compensation of Modelling Errors Due to Unknown Domain Boundary in
  Electrical Impedance Tomography}.
\newblock {\em IEEE Trans Med Imaging}, (99):1--1, 2011.

\bibitem{PuKa13}
S.~Pursiainen and M.~Kaasalainen.
\newblock {Iterative alternating sequential (IAS) method for radio tomography
  of asteroids in 3D}.
\newblock {\em Planet Space Sci}, 82-83(0):84--98, 2013.

\bibitem{ScKaHoKa12}
T.~Schuster, B.~Kaltenbacher, B.~Hofmann, and K.~Kazimierski.
\newblock {\em {Regularization Methods in Banach Spaces}}.
\newblock {Radon Series on Computational and Applied Mathematics}. Walter De
  Gruyter Incorporated, 2012.

\bibitem{St10}
A.~M. Stuart.
\newblock {Inverse problems: A Bayesian perspective}.
\newblock {\em Acta Numerica}, 19:451--559, 5 2010.

\bibitem{TaPuCoKaAr13}
T.~Tarvainen, A.~Pulkkinen, B.~Cox, J.~Kaipio, and S.~Arridge.
\newblock {Bayesian Image Reconstruction in Quantitative Photoacoustic
  Tomography}.
\newblock {\em IEEE Trans Med Imaging}, 32(12):2287--2298, Dec 2013.

\bibitem{To11}
U.~{Toussaint}.
\newblock {Bayesian inference in physics}.
\newblock {\em Rev Mod Phys}, 83(3):943--999, 2011.

\bibitem{TrAuVa04}
N.~J. Trujillo-Barreto, E.~Aubert-V{\'a}zquez, and P.~A. Vald{\'e}s-Sosa.
\newblock {Bayesian model averaging in EEG/MEG imaging.}
\newblock {\em Neuroimage}, 21(4):1300--1319, Apr. 2004.

\bibitem{WaJiYuJi13}
Y.~Wang, X.~Jiang, B.~Yu, and M.~Jiang.
\newblock {A Hierarchical Bayesian Approach for Aerosol Retrieval Using {MISR}
  Data}.
\newblock {\em J Am Stat Assoc}, 108(502):483--493, 2013.

\end{thebibliography}

\end{document}